\newtheorem{theorem}{Theorem}
\newtheorem{lemma}{Lemma}
\newtheorem{definition}{Definition}
\newtheorem{remark}{Remark}
\newcommand{\N}{\ensuremath{\mathbb{N}}}
\newcommand{\R}{\ensuremath{\mathbb{R}}}
\newcommand{\mom}{\mathbf{z}}
\newcommand{\mmon}{\mathbf{v}}
\newcommand{\rf}{\ell}
\newcommand{\optmeas}{\nu}
\renewcommand{\d}{\,\mathrm{d}}
\newcommand{\meas}{\mathscr{M}} 		
\newcommand{\prob}{\mathscr{P}} 		
\newcommand{\C}{\mathscr{C}} 
\newcommand{\Leb}{\mathscr{L}} 
\newcommand{\spt}[1]{\mathrm{spt}(#1)}
\newcommand{\genvar}{\mathbf{w}}
\newcommand{\rx}{\R[\genvar]}
\newcommand{\K}{\mathbf{K}}
\newcommand{\p}{\mathrm{\mathbf{p}}}
\newcommand{\T}{\mathbf{T}}
\newcommand{\X}{\mathbf{X}}
\newcommand{\Y}{\mathbf{Y}}
\newcommand{\yL}{\underline{y}}
\newcommand{\yR}{\bar{y}}
\def\KLT{\mathbf{K}_{T}}
\def\KLO{\mathbf{K}_{0}}
\def\KLL{\mathbf{K}_{L}}
\def\KLR{\mathbf{K}_{R}}
\def\KL{\mathbf{K}}
\def\ELTp{\mathbf{K}^+_{T}}
\def\ELTm{\mathbf{K}^-_{T}}
\def\ELOp{\mathbf{K}^+_{0}}
\def\ELOm{\mathbf{K}^-_{0}}
\def\measO{\sigma_0}
\def\measT{\sigma_T}
\def\measL{\sigma_L}
\def\measR{\sigma_R}
\def\ELLp{\mathbf{K}^+_{L}}
\def\ELLm{\mathbf{K}^-_{L}}
\def\ELRp{\mathbf{K}^+_{R}}
\def\ELRm{\mathbf{K}^-_{R}}
\def\ELp{\mathbf{K}^+}
\def\ELm{\mathbf{K}^-}
\def\sign{\mathrm{sign}}
\begin{document}

\title{ \LARGE \bf%
	A moment approach for entropy solutions to nonlinear hyperbolic PDEs\footnote{This work was partly funded by the ERC Advanced Grant Taming and by project 16-19526S of the Grant Agency of the Czech Republic.}
}
\author{Swann Marx$^1$, Tillmann Weisser$^1$, Didier Henrion$^{1,2}$ and Jean Bernard Lasserre$^{1,3}$}

\footnotetext[1]{CNRS; LAAS; Universit\'e de Toulouse; 7 avenue du colonel Roche, F-31400 Toulouse; France}
\footnotetext[2]{Faculty of Electrical Engineering, Czech Technical University in Prague, Technick\'a 4, CZ-16206 Prague, Czechia}
\footnotetext[3]{IMT; UPS; Universit\'e de Toulouse; 7 avenue du colonel Roche, F-31400 Toulouse; France}

\maketitle

\abstract{We propose to solve polynomial hyperbolic partial differential equations (PDEs) with convex optimization. This approach is based on a very weak notion of solution of the nonlinear equation, namely the measure-valued (mv) solution, satisfying a linear equation in the space of Borel measures. The aim of this paper is, first, to provide the conditions that ensure the equivalence between the two formulations and, second, to introduce a method which approximates the infinite-dimensional linear problem by a hierarchy of convex, finite-dimensional, semidefinite programming problems. This result is then illustrated on the celebrated Burgers equation. We also compare our results with an existing numerical scheme, namely the Godunov scheme.\\[1em]
	{\bf Keywords:} nonlinear partial differential equation, convex optimization, moments and positive polynomials.}

\section{Introduction}

This paper is concerned with the numerical study of scalar {\em nonlinear hyperbolic conservation laws},
a partial differential equation (PDE) which model numerous physical phenomena such as fluid mechanics, traffic flow or nonlinear acoustics \cite{dafermos2006hyperbolic}, \cite{whitham2011linear}. The existence and uniqueness of solutions to the associated Cauchy problem crucially depends on the flux and the initial condition \cite{kruvzkov1970first}. Even if the solution is unique, its numerical computation is still a challenge -- in particular when the solution has a shock, i.e., a discontinuity. Existing schemes based on discretization such as \cite{godunov1959difference} suffer from numerical dissipation: the shock is smoothened in the numerical solution and cannot be represented accurately. 
In fact, sometimes the exact location of the shock is of crucial interest for applications. Note however that some existing numerical schemes are able to capture shocks in the case where the conservation laws under consideration are linear, see e.g. \cite{Despres2001}.

In contrast to existing methods, a distinguishing feature of the numerical scheme presented in this paper is to {\em not} rely on discretization; it computes the solution in a given time-space window globally. From such a solution, the location of the shock at a given time can be computed up to the limits of machine precision. In our opinion this is a major advantage when compared to other numerical methods. 

\textbf{Measure-Valued Solutions} While PDEs are usually understood in a weak sense, DiPerna proposed an even weaker notion of solution, so-called {\em measure-valued solutions} (mv solutions for short) \cite{diperna1985measure}, which are based on Young measures, i.e. time and/or space dependent probability measures. Young measures have originally been introduced in the context of calculus of variations and optimal control, where 
the velocity or more generally the control is relaxed from being a function of time to being a time-dependent probability measure on the control space, see e.g. \cite[Part III]{fattorini1999infinite} for an overview.  Similarly, DiPerna introduced mv solutions to conservation laws as measures on the solution space, now depending on time and space. 

Naturally, every weak solution gives rise to a mv solution when identifying a solution $y(t,x)$ with the Young measure $\delta_{y(t,x)}(dy)$. We say then that the mv solution is {\it concentrated} on (the graph of) the solution. In this paper, we are focusing on a setup where both weak and mv solutions are unique (hence identical). In this case, both solutions coincide via the identification just mentioned. Note however that our approach also applies without any change to situations where the mv solution is not concentrated, e.g., because of an initial condition that is not concentrated either.

In order to ensure uniqueness we rely on the notion of {\em entropy solutions} which has been extended to entropy mv solutions. Entropy is a concept from thermodynamics that makes reference to the fact that differences in physical systems, e.g., the densities of particles in a room, tend to adjust to each other. It is well-known that the entropy solution of a scalar nonlinear hyperbolic conservation law is unique. For the generalized situation things are more involved. However under 
suitable assumptions on the initial condition entropy, uniqueness of mv solutions can be proved.

Recently there has been an increasing interest in numerical schemes to compute mv solutions for hyperbolic conservation laws with non concentrated initial conditions \cite{fjordholm2017construction,feireisl2018convergence}. 
Existing numerical schemes apply standard discretization methods to compute sufficiently many trajectories according to the distribution of the initial condition and recover the moments of the mv solution by considering limits of the trajectories. In contrast to this our approach directly computes the moments of the mv solution. Therefore in some sense this work is in the opposite direction. We compute moments to recover trajectories in the case where the initial condition and the solution are concentrated.

\textbf{Generalized Moment Problem} The key idea underlying the approach is to consider mv solutions as solutions to a particular instance of the {\em Generalized Moment Problem} (GMP) which is an infinite-dimensional optimization problem on appropriate spaces of measures, and where both the cost and the (possibly countably many) constraints are {\em linear} in the moments of the respective measures. Lasserre \cite{lasserre2009moments} showed that the GMP can be approximated as closely as desired by solving a hierarchy of convex {\em semidefinite programs} (SDP) of increasing size, provided that the data of the GMP are semi-algebraic; that is, the measures are supported on basic semi algebraic compact sets (i.e. bounded sets defined by finitely many polynomial inequalities and equations) and the involved functions are polynomial or semi-algebraic functions  (i.e. functions whose graphs are semi-algebraic sets). The duals to these SDPs are linear problems on polynomial sums of squares (SOS). Therefore this hierarchy of SDP relaxations is called the {\em moment-SOS (sums of squares) hierarchy}. By now, many problems from different fields of mathematics, including optimal control of ordinary differential equations \cite{lasserre2008nonlinear}, have been reformulated as particular instances of the GMP and then approximated or solved by the moment-SOS hierarchy. This paper is in the line of
these former contributions. That is, (i) the mv solutions are viewed (or formulated) as solutions
of a particular instance of the GMP, and (ii) the moments of mv solutions are 
approximated as closely as desired by solving a moment-SOS hierarchy.

Any optimal solution of each semidefinite relaxation at step $d$ in the hierarchy provides information about the mv solution in the form of a sequence of its (approximated) moments, up to degree $d$; the higher is $d$ the better is the approximation of its moments. As we restrict to measures with compact support,  they are fully characterized from knowledge of the complete sequence of their moments. Interestingly, it is worth noting that in \cite{fjordholm2017construction} it was already pointed out that the statistical moments of mv solutions are precisely the quantities of interest. 

\textbf{Contribution} 
To the best of our knowledge, this work seems to be the first
contribution where nonlinear PDEs are addressed {\em without time-space domain discretization}
and using convex optimization {\em with a  proof of convergence}. An original early attempt to compute mv solutions of nonlinear wave equations with linear programming was reported in \cite{rubio1997}, also in the presence of controls. In \cite{lasserre2008nonlinear}, the authors apply to moment-SOS hierarchy to solve optimal control problems of ordinary differential equations, and it was shown in \cite{pauwels2017} that it provides a sequence of subsolutions converging in norm to the viscosity solution of the Hamilton-Jacobi-Bellmann PDE, a particular nonlinear hyperbolic equation. In \cite{mevissen2011}, nonlinear PDEs are discretized into large-scale sparse polynomial optimization problems, in turn solved with the moment-SOS hierarchy. More recently, bounds on functionals of solutions were obtained with SOS polynomials for nonlinear PDEs arising in fluid dynamics in \cite{chernyshenko2014polynomial} and for the nonlinear Kuramoto-Sivashinsky PDE in \cite{goluskin2018bounds}. These works, however, focus only on the dual SOS problems, and they provide bounds with no convergence guarantees. They do not exploit the primal formulation of the problem on moments, which we believe to be crucial for convergence analysis.
In the recent work \cite{mishra2016}, the authors compute mv solutions for the equations of compressible and incompressible inviscid fluid dynamics, with the help of discretization algorithms based on Monte Carlo methods. Even more recently, in \cite{brenier2017optimization} the author has proposed a convex formulation for the classical solution to nonlinear hyperbolic PDEs and he proves that the entropy solution to the Burgers equation might be recovered also via this optimization problem. However, this paper does not provide a numerical scheme. In the concurrent work \cite{korda2018}, the authors propose to use the moment-SOS hierarchy in a much more general setting of a controlled polynomial PDE. However, at that level of generality, there is no proof that the numerical scheme will converge to an appropriate solution of the PDE. For more references on previous attempts to use convex optimization for solving and controlling PDEs, the reader is referred to the introduction of \cite{korda2018}.

\textbf{Outline} This paper is organized as follows. Section \ref{1.solution} introduces different notions of solutions for scalar conservation laws and provides some links between these notions. Section \ref{2.convex} introduces the Moment-SOS hierarchy, proves that the mv solution framework can be written as an instance of the GMP, and explains how one may interpret the moment solutions. The focus of Section \ref{3.riemann} is  on a numerical study of the Burgers equation. Finally, Section \ref{5.conclusion} collects some concluding remarks and further research issues to be addressed.

\textbf{Notation} If $\mathcal{X}$ is a topological space, let $\C(\mathcal{X})$ resp. $\C_0(\mathcal{X})$  resp. $\C^1_c(\mathcal{X})$ denote the space of functions on $\mathcal X$ that are continuous resp. continuous and vanishing at infinity resp. continuously differentiable with compact support. For $p\geq 1$, the Lebesgue space $\Leb^p(\mathcal{X})$ consists of functions on $\mathcal{X}$ whose $p$-norms are bounded. The set of signed resp. positive Borel measures is denoted $\meas(\mathcal{X})$ resp. $\meas(\mathcal{X})_+$. The set of probability measures on $\mathcal{X}$ is denoted by $\prob(\mathcal{X})$ and it consists of elements $\mu \in \meas(\mathcal{X})_+$ such that $\mu(\mathcal{X})=1$. The measure $\lambda_{\mathcal{X}} \in \prob(\mathcal{X})$ denotes the normalized Lebesgue measure on $\mathcal{X}$. Given a vector $\genvar=(w_1 \: \ldots \: w_n)$, we denote by $\rx$ the ring of real multivariate polynomials in the variables $w_1,\ldots,w_n$.

\section{Notions of solutions}
\label{1.solution}

We start with a brief overview of different notions of solutions to scalar polynomial PDEs. For details, we refer to \cite{dafermos2006hyperbolic} for weak solutions and \cite{necas1996weak} for measure-valued solutions. The aim of this section is to give a clear link between these two concepts of solutions. 

\subsection{Weak and entropy solutions}
In order to study mv solutions, it is instructive to revisit the classical concept of weak solutions first. Consider therefore the Cauchy problem 
\begin{subequations}
\label{eq:burgers}
\begin{align}
&\frac{\partial y}{\partial t}(t,x) + \frac{\partial f(y)}{\partial x}(t,x) =0, \quad (t,x)\in\mathbb{R}_+\times \mathbb{R}, \label{eq:burgers: conversation law}\\
&y(0,x)=y_0(x),\quad x\in\mathbb{R},\label{eq:burgers: initial condition}
\end{align}
\end{subequations}
where \eqref{eq:burgers: conversation law} is a scalar  hyperbolic conservation law with $f\in \C^1(\R)$ and \eqref{eq:burgers: initial condition} provides an initial condition $y_0\in \Leb^1(\mathbb{R})\cap \Leb^\infty(\mathbb{R})$. Note that \eqref{eq:burgers: conversation law} encompasses, among others, the well-known Burgers equation if one sets $f(y)=\frac{1}{2}y^2$. 

Even if the initial condition $y_0$ is smooth, solutions to \eqref{eq:burgers} might be discontinuous (see \cite[p. 143]{evans2010pde} for the case of the Burgers equation). Solutions to this problem are hence usually understood in the following weak sense.
\begin{definition}[Weak solution]
A function $y\in \Leb^\infty(\mathbb{R}_+\times \mathbb{R})$ is a weak solution to \eqref{eq:burgers} if, for all test functions $\psi_1\in \C^1_c(\mathbb{R}_+\times \mathbb{R})$, it satisfies
\begin{equation}\label{y}
\int_{\mathbb{R}_+}\int_{\mathbb{R}} \left(\frac{\partial \psi_1(t,x)}{\partial t} y(t,x) + \frac{\partial \psi_1(t,x)}{\partial x} f(y(t,x)) \right)\, dx\,dt +  \int_{\mathbb{R}} \psi_1(0,x) y_0(x)\, dx=0.
\end{equation}
\end{definition}

In general, weak solutions to \eqref{eq:burgers} are not unique. However it can be shown (see e.g. \cite{kruvzkov1970first}) that among all possible weak solutions, only one has a physical meaning. This solution is called the \emph{entropy solution} and can be characterized as follows.
\begin{definition}[Entropy pair/entropy solution]
\begin{itemize}
\item[(i)]A pair of functions $\eta,q\in \C^1(\mathbb{R})$ is called an \emph{entropy pair} for \eqref{eq:burgers: conversation law} if $\eta$ is strictly convex and $q^\prime= f^\prime\eta^\prime$.\\
\item[(ii)] A weak solution $y\in \Leb^\infty(\mathbb{R}_+\times\mathbb{R})$ of \eqref{eq:burgers} is an \emph{entropy solution} if, for all entropy pairs and all non-negative test functions $\psi_2\in \C^1_c(\mathbb{R}_+\times \mathbb{R})$, it satisfies
\begin{equation}
\label{entropy-sol}
\int_{\mathbb{R}_+}\int_{\mathbb{R}} \left(\frac{\partial\psi_2}{\partial t}\eta(y) + \frac{\partial \psi_2}{\partial x}q(y)\right) dx\, dt + \int_{\mathbb{R}} \psi_2(0,x) \eta(y_0)(x) \,dx\geq 0.
\end{equation}
\end{itemize}
\end{definition}

\subsection{Measure-valued solutions}

Generally, regularity results of conservation laws are obtained from regularized conservation laws
\begin{equation*}
\frac{\partial}{\partial t} y^\varepsilon + \frac{\partial}{\partial x}f(y^\varepsilon) - \varepsilon \frac{\partial^2}{\partial x^2} y^\varepsilon=0,\
\end{equation*}
where $\varepsilon>0$ is a fixed parameter.
Then one studies the limit of solutions $y^\varepsilon$ as $\varepsilon$ goes to $0$ and tries to retrieve some of regularity properties of the latter equation for the conservation law. However, on the one hand, regularized solution $y^\varepsilon$ may or may not converge to a weak solution $y$ of \eqref{eq:burgers}. This is due to a lack of reflexivity of the space $\Leb^\infty$. On the other hand, regularized solutions $y^\varepsilon$ necessarily converge to a measure-valued (mv) solution. This notion builds upon the concept of a Young measure.

\begin{definition}[Young measure]
A Young measure on a Euclidean space $\mathcal{X}$ is a map $\mu : \mathcal{X} \rightarrow \prob(\mathbb{R})$, $\xi \mapsto \mu_{\xi}$, such that for all $g\in \C_0(\mathbb{R})$ the function $\xi \mapsto \int_\R g(y)\mu_{\xi}(dy)$ is measurable.
\end{definition}

Later, mv solutions have also proved to be useful in the study of problems more general than \eqref{eq:burgers}, where the initial condition \eqref{eq:burgers: initial condition} is replaced by a Young measure parametrized in space (see e.g. \cite{fjordholm2017construction} and the references therein). The generalized problem is to find a Young measure $\mu_{(t,x)}$ which satisfies the following Cauchy problem:
\begin{subequations}
\label{eq:burgers mv}
\begin{align}
&{\partial_t}\langle \mu_{(t,x)}, y \rangle + {\partial_x}\langle\mu_{(t,x)} ,f(y)\rangle = 0, \quad (t,x)\in\mathbb{R}_+\times \mathbb{R}, \label{eq:burgers mv: conservation law}\\
&\mu_{(0,x)}=\measO,\quad x\in\mathbb{R},\label{eq:burgers mv: initial condition}
\end{align}
\end{subequations}
where $\langle\cdot, \cdot \rangle$ denotes integration of a measure $\mu \in \meas(\mathbb{R})$ against a function $g\in \C(\mathbb{R})$:
\begin{equation*}
\langle \mu,g \rangle := \int_{\mathbb{R}} g(y) \mu(dy).
\end{equation*} 
In \eqref{eq:burgers mv} the measure $\measO$ is a given Young measure on $\R$, and $f$ is a continuously differentiable function on $\R$. The conservation law \eqref{eq:burgers mv: conservation law} has to be understood in the sense of distributions, i.e.:
\begin{definition}[Measure-valued solution]
A Young measure is a measure-valued (mv) solution to \eqref{eq:burgers mv} if, for all test functions $\psi_1\in \C^1_c(\mathbb{R}_+\times\mathbb{R})$, it satisfies 
\begin{equation}\label{eq:mv sol}
\int_{\mathbb{R}_+}\int_{\mathbb{R}} \left( \frac{\partial \psi_1(t,x)}{\partial t} \langle \mu_{(t,x)},y\rangle + \frac{\partial \psi_1(t,x)}{\partial x} \langle \mu_{(t,x)},f(y)\rangle\right) dx\,dt + \int_{\mathbb{R}} \psi_1(0,x)\langle \measO,y\rangle dx = 0.
\end{equation}
\end{definition}
Note that the weak solution $y$ has been replaced by a time-space parametrized probability measure $\mu$ supported on the range of $y$. Whereas a weak solution is requested to satisfy \eqref{y}, only averages of the mv solution are considered in \eqref{eq:mv sol}. 
It is easy to see that every weak solution induces a mv solution via the canonical embedding $y(t,x)\mapsto\delta_{y(t,x)}$. As in the case of weak solution, an entropy condition is needed in order to select solutions with a physical meaning. Quite in analogy to entropy solutions, entropy mv solutions are defined as follows.

\begin{definition}[Entropy measure-valued solution]
An mv solution $\mu$ is an entropy mv solution to \eqref{eq:burgers mv} if, for all entropy pairs $(\eta,q)$ and all non-negative test functions $\psi_2\in \C^1_c(\mathbb{R}_+\times\mathbb{R})$, it satisfies
\begin{equation}
\label{mv-entropy}
\int_{\mathbb{R}_+}\int_{\mathbb{R}} \left( \frac{\partial \psi_2(t,x)}{\partial t} \langle \mu_{(t,x)},\eta\rangle + \frac{\partial \psi_2(t,x)}{\partial x} \langle \mu_{(t,x)},q\rangle\right) dx\,dt + \int_{\mathbb{R}} \psi_2(0,x)\langle \measO,\eta\rangle dx \geq 0.
\end{equation}
\end{definition}
\begin{remark} Again it is straightforward to see that entropy solutions are entropy mv solutions via the canonical embedding $y(t,x)\mapsto \delta_{y(t,x)}$. However, as demonstrated on an example in \cite[p. 775]{fjordholm2017construction}, in contrast with entropy solutions, entropy mv solutions are not necessarily unique. 
\end{remark}
We have seen that the concept of mv solutions is weaker than the concept of weak solutions. Hence mv solutions are a relaxation of weak solutions: every weak solution is also an mv solution, but the set of mv solutions can be larger than the set of weak solutions.
However, the following result states that considering mv solutions is not a relaxation. To be more precise, when the initial measure in \eqref{eq:burgers mv: initial condition} is concentrated on (the graph of) the initial condition in \eqref{eq:burgers: initial condition}, then the entropy mv solution to \eqref{eq:burgers mv} is unique and concentrated on (the graph of) the (unique) entropy solution to \eqref{eq:burgers}.
\begin{theorem}[Concentration of the entropy mv solution]
\label{thm-concentration}
Let $C$ be the Lipschitz constant of the function $f$. Let $y$ be an entropy solution and $\mu$ be an entropy mv solution to \eqref{eq:burgers}. Then, for all $T\geq 0$ and all $r\geq 0$, it holds
\begin{equation}
\label{contraction}
\int_{|x|\leq r} \langle \mu_{(t,x)},|y-y(T,x)|\rangle dx \leq \int_{|x|\leq r+CT} \langle \measO,|y-y_0(x)|\rangle dx.
\end{equation}
In particular, if $\measO=\delta_{y_0(x)}$, then $\mu_{(t,x)}=\delta_{y(t,x)}$ for all $t \in [0,T]$ and all $x$ such that $|x|\leq r$.
\end{theorem}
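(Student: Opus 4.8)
The plan is to adapt Kruzhkov's classical doubling-of-variables argument to the measure-valued setting. The first step is to replace the smooth strictly convex entropies by the family of Kruzhkov entropy pairs $\eta_k(y)=|y-k|$, $q_k(y)=\sign(y-k)(f(y)-f(k))$, indexed by $k\in\R$. Although $\eta_k$ is neither strictly convex nor $\C^1$, it is a limit of admissible entropy pairs, so by a routine approximation both the entropy inequality \eqref{entropy-sol} for $y$ and the entropy mv inequality \eqref{mv-entropy} for $\mu$ continue to hold with $(\eta,q)$ replaced by $(\eta_k,q_k)$ for every $k$. The crucial quantitative input is the Lipschitz bound $|q_k(y)|=|f(y)-f(k)|\le C\,|y-k|$, which is exactly where the constant $C$, and hence the finite propagation speed, will enter.

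Next I would double the variables. Writing $(t,x)$ for the variables carrying the measure $\mu$ and $(s,z)$ for the variables carrying the entropy solution $y$, I freeze $(s,z)$ and apply the mv entropy inequality with the constant $k=y(s,z)$, then freeze $(t,x)$ and apply the entropy inequality for $y$ with $k$ equal to the dummy variable $\lambda$ of $\mu_{(t,x)}$. Using a single nonnegative test function $\Phi(t,x,s,z)$, integrating the first inequality in $(s,z)$ and the second in $(t,x)$ against $\mu_{(t,x)}$, and adding, the symmetry $\sign(a-b)(f(a)-f(b))=\sign(b-a)(f(b)-f(a))$ makes the two flux contributions coincide, so the sum takes the clean form
\begin{equation*}
\int\!\!\int\!\!\int\!\!\int \Big[(\partial_t+\partial_s)\Phi\,|\lambda-y(s,z)| + (\partial_x+\partial_z)\Phi\,\sign(\lambda-y(s,z))(f(\lambda)-f(y(s,z)))\Big]\,\mu_{(t,x)}(d\lambda)\,\dx\,\dt\,\mathrm{d}z\,\mathrm{d}s + \text{(initial terms)} \ \ge\ 0 .
\end{equation*}

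I would then specialise $\Phi(t,x,s,z)=\rho_\varepsilon(t-s)\,\rho_\varepsilon(x-z)\,\chi\big(\tfrac{t+s}{2},\tfrac{x+z}{2}\big)$ with standard mollifiers $\rho_\varepsilon$. Since $\rho_\varepsilon(t-s)$ depends only on $t-s$, the symmetric derivative $\partial_t+\partial_s$ annihilates it, and likewise $\partial_x+\partial_z$ annihilates $\rho_\varepsilon(x-z)$, so only $\chi$ is differentiated; letting $\varepsilon\to0$ collapses the mollifiers onto the diagonal $t=s$, $x=z$ and, at almost every $(t,x)$, produces the reduced inequality
\begin{equation*}
\int_{\R_+}\!\!\int_\R \Big[\partial_t\chi\,\langle\mu_{(t,x)},|\lambda-y(t,x)|\rangle + \partial_x\chi\,\langle\mu_{(t,x)},\sign(\lambda-y(t,x))(f(\lambda)-f(y(t,x)))\rangle\Big]\,\dx\,\dt + \int_\R \chi(0,x)\langle\measO,|\lambda-y_0(x)|\rangle\,\dx \ \ge\ 0 .
\end{equation*}
Finally I would let $\chi$ increase to the indicator of the truncated cone $\{(t,x):0\le t\le T,\ |x|\le r+C(T-t)\}$: the term $\partial_t\chi$ produces the top face $t=T$ (the left-hand side of \eqref{contraction}), $\chi(0,\cdot)$ produces the base $|x|\le r+CT$ (its right-hand side), and on the slanted lateral boundary the spatial-to-temporal slope equals $C$, so the Lipschitz bound $|f(\lambda)-f(y)|\le C|\lambda-y|$ forces the combined lateral contribution to have the favourable sign and be discardable. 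This yields \eqref{contraction}. The concentration statement is then immediate: if $\measO=\delta_{y_0(x)}$ the right-hand side vanishes, forcing $\langle\mu_{(T,x)},|\lambda-y(T,x)|\rangle=0$ for a.e.\ $|x|\le r$, i.e.\ $\mu_{(T,x)}=\delta_{y(T,x)}$; as $T$ and $r$ are arbitrary, this holds for all $t\in[0,T]$ and all such $x$.

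The main obstacle I anticipate is the passage to the limit $\varepsilon\to0$: one must justify convergence of the mollified averages to their diagonal values simultaneously in the $(t,x)$ variables and against the merely weakly-measurable family $\mu_{(t,x)}$, which requires $\Leb^1$-continuity of translation together with a Lebesgue-point argument for $y$ and dominated-convergence control of the measure side, and in particular a careful treatment of the two time-zero contributions so that they merge into the single initial term $\langle\measO,|\lambda-y_0(x)|\rangle$. The second delicate point is the sign analysis on the lateral boundary of the cone, where the geometry of the cutoff $\chi$ must be matched precisely to the Lipschitz speed $C$ so that the lateral term is genuinely non-positive and can be dropped.
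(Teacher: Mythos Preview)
Your proposal is correct and follows essentially the same route as the paper's proof: Kruzhkov entropies, the doubling of variables with the mollifier test function $\rho_\varepsilon(t-s)\rho_\varepsilon(x-z)\chi\big(\tfrac{t+s}{2},\tfrac{x+z}{2}\big)$, passage to the diagonal as $\varepsilon\to0$, and then the truncated-cone cutoff with slope $C$ so that the lateral contribution has the right sign via the Lipschitz bound. The anticipated difficulties you flag---the diagonal limit and the lateral sign analysis---are exactly the two places where the paper spends its effort.
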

\begin{remark}
\label{remark-kruzkhov}
The proof of Theorem \ref{thm-concentration} is similar to the one provided in \cite{kruvzkov1970first} and  it is postponed to Appendix \ref{proof-concentration}. It is based on the doubling variable strategy, using the following family of entropy pairs:
\begin{equation}
\label{kruzkhov-entropy}
\eta_v(y) = |y-v|,\quad q_v(y) = \sign(y-v)(f(y)-f(v))
\end{equation}
parametrized in $v \in\mathbb{R}$.
In \cite{lax1971shock}, it has been proved that linear combinations of these entropy pairs, together with the convex hull of linear functions, generate all entropy pairs. In other words, 
to prove Theorem \ref{thm-concentration} for every entropy pair it is enough to
consider the entropy pairs \eqref{kruzkhov-entropy}. 

Moreover, note that initially the doubling variable strategy has been used to prove uniqueness of the solution to scalar nonlinear conservations laws. The main drawback is that the entropy solution has to satisfy this inequality for {all} convex pairs. However
for the specific case of the Burgers equation, it is shown in \cite{de2004minimal} and \cite{panov1994uniqueness} that one may
consider only one convex pair. \end{remark} 

\subsection{An emphasis on compact sets}

In practice, one computes or approximates the solution on compact subsets, so let
\begin{equation}
\label{T-X}
\mathbf{T} : = [0,T], \quad
\mathbf{X} := [L,R]
\end{equation}
be the respective domains of time $t$ and space $x$, for fixed (but arbitrary) constants $T,L,R$.
After scaling, we assume without loss of generality that $T=R-L=1$.

Note that the entropy inequality induces a stability property:
\begin{equation}
\Vert y(t,\cdot)\Vert_{\Leb^\infty(\mathbf X)} \leq \Vert y_0\Vert_{\Leb^\infty(\mathbb{R})},\:\forall t\geq 0
\end{equation}
see e.g. \cite[Theorem 6.2.4]{dafermos2006hyperbolic}.
Since $y_0$ is bounded in $\Leb^\infty$, it follows from the maximum principle \cite[Theorem 6.3.2]{dafermos2006hyperbolic} that $y(t,.)$ is bounded in $\Leb^\infty$ for all $t\geq 0$. Hence, we can consider that $y$ takes values in the following compact set
\begin{equation}
\label{Y}
\mathbf{Y} := [\yL,\yR],
\end{equation}
where the bounds $\yL := \mathrm{ess}\:\inf_{x\in\mathbb R} y_0(x)$ and $\yR:=\mathrm{ess}\:\sup_{x\in\mathbb R} y_0(x)$ depend on the initial condition.  
On $\mathbf{T}\times\mathbf{X}$, the polynomial hyperbolic equations given in \eqref{eq:burgers} reads:
\begin{equation}
\label{eq:burgers:compact}
\left\{
\begin{split}
&\frac{\partial y}{\partial t} + \frac{\partial f(y)}{\partial x} =0, \quad (t,x)\in \mathbf{T}\times \mathbf{X},\\
&y(0,x)=y_0(x),\quad x\in\mathbf{X}.
\end{split}
\right.
\end{equation}

\begin{definition}[Entropy solution on compact sets] \label{def: emvsol on compact sets}
A weak solution $y$ is an {entropy solution} to \eqref{eq:burgers:compact} if, for all test functions $\psi_1\in \C^1(\mathbb{R}_+\times \mathbb{R})$, it satisfies
\begin{equation}
\label{1.burgers:compact}
\begin{split}
\int_{\mathbf{T}}\int_{\mathbf{X}} & \left(\frac{\partial\psi_1}{\partial t}y + \frac{\partial \psi_1}{\partial x}f(y)\right) dx dt + \int_{\mathbf{X}} \psi_1(0,x)y_0(x) dx - \int_{\mathbf{X}} \psi_1(T,x) y(T,x) dx \\
&+ \int_{\mathbf{T}} \psi_1(t,L)y(t,L) dt - \int_{\mathbf{T}} \psi_1(t,R)y(t,R) dt = 0
\end{split}
\end{equation}
and, for all convex pairs $(\eta,q)$ and all non-negative test functions $\psi_2\in \C^1(\mathbb{R}_+\times \mathbb{R})$, it satisfies
\begin{equation}
\label{2.burgers:compact}
\begin{split}
\int_{\mathbf{T}}\int_{\mathbf{X}} & \left(\frac{\partial\psi_2}{\partial t}\eta(y) + \frac{\partial \psi_2}{\partial x}q(y)\right) dx dt + \int_{\mathbf{X}} \psi_2(0,x) \eta(y_0)(x) dx- \int_{\mathbf{X}} \psi_2(T,x) \eta(y(T,x)) dx\\
& + \int_{\mathbf{T}} \psi_2(t,L)q(y(t,L)) dt - \int_{\mathbf{T}} \psi_2(t,R)q(y(t,R)) dt\geq 0.
\end{split}
\end{equation}
\end{definition}

As we work  on compact sets, the test functions do not have to vanish at infinity. However
new terms $y(t,R)$ and $y(t,L)$ now appear. Related to this notion of solutions on compact sets, we also have a similar definition for mv entropy solution.

\begin{definition}[Measure-valued entropy solution on compact sets]
A Young measure $\mu : (t,x)\in \T\times \X\mapsto \mu_{(t,x)}\in \prob(\Y)$ is an entropy measure-valued solution to \eqref{eq:burgers:compact} if, for all test functions $\psi_1\in \C^1(\T\times\X)$, it satisfies
\begin{equation}
\label{linear-burgers1}
\begin{split}
\int_{\mathbf{T}}\int_{\mathbf{X}} & \left(\frac{\partial\psi_1}{\partial t}\langle \mu_{(t,x)}, y \rangle + \frac{\partial \psi_1}{\partial x}\langle \mu_{(t,x)},f(y)\rangle \right) dx \,dt + \int_{\mathbf{X}} \psi_1(0,x)\langle \measO,y \rangle dx\\
& - \int_{\mathbf{X}} \psi_1(T,x) \langle \measT, y \rangle dx + \int_{\mathbf{T}} \psi_1(t,L)\langle \measL,f(y)\rangle dt - \int_{\mathbf{T}} \psi_1(t,R) \langle \measR, f(y) \rangle dt = 0
\end{split}
\end{equation}
and, for all convex pairs $(\eta,q)$ and all non-negative test functions $\psi_2\in \C^1(\mathbf{T}\times \mathbf{X})$, it satisfies
\begin{equation}
\label{linear-burgers2}
\begin{split}
&\int_{\mathbf{T}}\int_{\mathbf{X}}  \left(\frac{\partial\psi_2}{\partial t}\langle \mu_{(t,x)},\eta(y)\rangle + \frac{\partial \psi_2}{\partial x}\langle \mu_{(t,x)},q(y)\rangle \right) dx \,dt + \int_{\mathbf{X}} \psi_2(0,x) \langle \measO,\eta(y)\rangle dx\\
&- \int_{\mathbf{X}} \psi_2(T,x) \langle \measT,\eta(y)\rangle dx + \int_{\mathbf{T}} \psi_2(t,L)\langle \measL,q(y)\rangle dt - \int_{\mathbf{T}} \psi_2(t,R)\langle \measR, q(y)\rangle dt\geq 0,
\end{split}
\end{equation}
where $\measO$, $\measT$, $\measL$ resp. $\measR$ are Young measures supported on $\T$, $\X$, $\T$  resp. $\X$.
\end{definition}

\begin{remark}[Imposing constraints on the boundary]
\label{rem:boundary}
To ensure concentration of $\mu_{(t,x)}$ on the graph of the solution to \eqref{1.burgers:compact}-\eqref{2.burgers:compact}, in addition to the condition $\measO=\delta_{y_0(x)}$, one may impose conditions on the boundary measures $\measL$ and/or $\measR$. In practice, one knows the initial condition in an interval larger than $\X$ and
so one is able to impose $\measL$ and/or $\measR$. The width of this interval depends on the Lipschitz constant of the flux, $T$, $L$ and $R$. As an illustrative example, consider the case
where the initial condition is positive and the flux is strictly convex. By the classical method of characteristics, if the initial condition $y_0$ is positive then so is the solution $y$ for all $t\geq0$. 
In particular if $f$ is strictly convex we only need to impose knowledge at the left of the box $\X$. Therefore  $\measL$ has to be known for all $t\in \T$, and $\measR$ is unconstrained. We refer to \cite{randall1992numerical} for a more precise discussion on the choice of the boundary constraint.
\end{remark}

\section{A convex optimization approach for mv solutions on compact sets}
\label{2.convex}

In the latter section, we introduced mv solutions for scalar hyperbolic equations. Note that measures are fully characterized by their moments on compact sets, see e.g. \cite[p. 52]{lasserre2009moments}. This means in particular that moments {are} the quantities of interest. The aim of this section is to express formulations \eqref{linear-burgers1}-\eqref{linear-burgers2} as constraints on the moments, to explain how one can compute numerically these moments thanks to the moment-SOS hierarchy. We also show how one can interpret these moments in the case where the initial measure is concentrated. 

\subsection{Moment constraints for the entropy mv solution}

\label{sec_mv-gmp}

Let $\nu\in\meas(\KL)_+$, with $\KL:=\T\times\X\times \Y$. In the following, we derive moment constraints that will imply that $\nu$ can be desintegrated as follows
\begin{equation}
\label{mv-occupation-measure}
d\nu(t,x,y)=dt\,dx\,d\mu_{(t,x)}(dy)
\end{equation}
or, equivalently,
\begin{equation}
\nu =\lambda_{\T}\lambda_{\X} \mu_{(t,x)}, 
\end{equation}
where $\mu$ is an entropy mv solution satisfying \eqref{linear-burgers1} and \eqref{linear-burgers2}.
In (\ref{mv-occupation-measure}) the measure $\nu$ is called an {\it occupation measure} and the Young measure $\mu$ is its conditional measuring $y$ given $t$ and $x$.
We also need  to introduce the following time boundary measures
\begin{equation}
d\nu_0(t,x,y):=\delta_0(dt)\,dx\,\measO(dy), \quad d\nu_T(t,x,y):=\delta_T(dt)\,dx\,\measT(dy)
\end{equation}
whose supports are $\KLO :=\lbrace 0\rbrace \times\mathbf{X}\times\mathbf{Y}$ and $\KLT:=\lbrace T\rbrace\times \mathbf{X}\times\mathbf{Y}$ respectively.
Similarly, we introduce the following space boundary measures.
\begin{equation}
d\nu_{L}(t,x,y):=dt\,\delta_L(dx)\,\measL(dy), \quad d\nu_R(t,x,y):=dt\,\delta_R(dx)\,\measR(dy)\
\end{equation}
whose supports are given by $\KLL:=\T\times\lbrace L\rbrace\times \Y$ and $\KLR:=\mathbf{T}\times\lbrace R\rbrace\times\mathbf{Y}$  respectively.

First, to ensure that the marginal of $\nu$ with respect to $t$ and $x$ is the Lebesgue measure on $\T\times\X$, it suffices to impose that:
\begin{equation}
\label{moment-constraint-measm}
\int_{\KL} t^{\alpha_1}x^{\alpha_2}\,d\nu(t,x,y) = \int_{\T\times\X} t^{\alpha_1} x^{\alpha_2}\, dt\, dx,\quad \alpha \in\mathbb{N}^2.
\end{equation}
In a similar manner, we can enforce the respective marginal of the boundary measures to be products of an Dirac measure and the Lebesgue as follows
\begin{equation}
\label{moment-constraint-meas0}
\int_{\KLO} 0^{\alpha_1} x^{\alpha_2}\,d\nu_0(t,x,y) = \int_{\X} 0^{\alpha_1} x^{\alpha_2} dx,\quad \alpha\in\mathbb{N}^2,
\end{equation}
\begin{equation}
\label{moment-constraint-measT}
\int_{\KLT} T^{\alpha_1} x^{\alpha_2}\,d\nu_T(t,x,y) = \int_{\X} T^{\alpha_1} x^{\alpha_2} dx,\quad \alpha\in\mathbb{N}^2,
\end{equation}
\begin{equation}
\label{moment-constraint-measL}
\int_{\KLL} t^{\alpha_1} L^{\alpha_2}\,d\nu_L(t,x,y) = \int_{\T} t^{\alpha_1} L^{\alpha_2} dt,\quad \alpha\in\mathbb{N}^2
\end{equation}
and
\begin{equation}
\label{moment-constraint-measR}
\int_{\KLR} t^{\alpha_1} R^{\alpha_2}\,d\nu_R(t,x,y) = \int_{\T} t^{\alpha_1} R^{\alpha_2} dt,\quad \alpha\in\mathbb{N}^2.
\end{equation}

Next, we aim at proving that \eqref{linear-burgers1} and \eqref{linear-burgers2} can also be expressed by moment constraints. We split the exposition into two steps: the first one deals with \eqref{linear-burgers1}, while the second deals with \eqref{linear-burgers2}.

\subsubsection{First step: enforcing \eqref{linear-burgers1} by moment constraints}

\begin{lemma}
\label{poly-representation1}
Let $\phi^\alpha_1(t,x,y):=t^{\alpha_1}x^{\alpha_2}y$ and $\phi^\alpha_2(t,x,y):=t^{\alpha_1}x^{\alpha_2}f(y)$ for $\alpha\in\mathbb{N}^2$. Linear constraint \eqref{linear-burgers1} is equivalent to
\begin{equation}
\label{first-moment-constraint}
\int_{\KL} \left(\frac{\partial \phi^\alpha_1}{\partial t} + \frac{\partial \phi^\alpha_2}{\partial x} \right)d\nu + \int_{\KLO} \phi^\alpha_1 \, d\nu_0 - \int_{\KLT} \phi^\alpha_1 \, d\nu_T + \int_{\KLL} \phi^\alpha_2 \, d\nu_L -\int_{\KLR} \phi^\alpha_2 \, d\nu_R=0
 \end{equation}
for all $\alpha\in\mathbb{N}^2$. 
\end{lemma}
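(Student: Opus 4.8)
The plan is to prove the equivalence by treating the two implications separately: the forward one is a direct substitution, while the converse rests on a density argument.

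First I would show that \eqref{linear-burgers1} implies \eqref{first-moment-constraint}. The point is that each monomial $\psi_1(t,x)=t^{\alpha_1}x^{\alpha_2}$ is an admissible test function in $\C^1(\T\times\X)$, so \eqref{linear-burgers1} holds for this particular choice. I would then rewrite every term by means of the disintegration \eqref{mv-occupation-measure} and the definitions of $\nu_0,\nu_T,\nu_L,\nu_R$. Since $\phi^\alpha_1=\psi_1\cdot y$ and $\phi^\alpha_2=\psi_1\cdot f(y)$, one has $\partial_t\phi^\alpha_1=(\partial_t\psi_1)\,y$ and $\partial_x\phi^\alpha_2=(\partial_x\psi_1)\,f(y)$, whence $\int_{\KL}\frac{\partial\phi^\alpha_1}{\partial t}\,d\nu=\int_{\T}\int_{\X}(\partial_t\psi_1)\langle\mu_{(t,x)},y\rangle\,dx\,dt$ and $\int_{\KL}\frac{\partial\phi^\alpha_2}{\partial x}\,d\nu=\int_{\T}\int_{\X}(\partial_x\psi_1)\langle\mu_{(t,x)},f(y)\rangle\,dx\,dt$. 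The four boundary integrals against $\nu_0,\nu_T,\nu_L,\nu_R$ collapse in the same way to the boundary integrals of \eqref{linear-burgers1} evaluated at $\psi_1=t^{\alpha_1}x^{\alpha_2}$ (with the convention $0^0=1$ accounting for the $t=0$ term). Matching the two expressions term by term yields \eqref{first-moment-constraint}.

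For the converse, I would first exploit that both sides of \eqref{first-moment-constraint} are linear in the $\alpha$-indexed test functions: if \eqref{first-moment-constraint} holds for every $\alpha\in\mathbb N^2$, then by taking finite linear combinations the same identity holds with $\phi^\alpha_1,\phi^\alpha_2$ replaced by $p\cdot y$, $p\cdot f(y)$ for an arbitrary polynomial $p\in\R[t,x]$; equivalently, \eqref{linear-burgers1} holds whenever $\psi_1$ is a polynomial. It then remains to upgrade this from polynomial to arbitrary $\psi_1\in\C^1(\T\times\X)$. To do so I would approximate a given $\psi_1$ by a sequence of polynomials $p_n$ with $p_n\to\psi_1$, $\partial_t p_n\to\partial_t\psi_1$ and $\partial_x p_n\to\partial_x\psi_1$, all uniformly on the compact box $\T\times\X$; such a sequence exists because polynomials are dense in $\C^1(\T\times\X)$, which follows for instance from the $\C^1$-convergence of multivariate Bernstein operators. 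Writing \eqref{linear-burgers1} for each $p_n$ and letting $n\to\infty$ recovers \eqref{linear-burgers1} for $\psi_1$: every integrand converges uniformly, and every measure involved ($\nu$ and the four boundary measures) is finite on its compact support while $f$ is continuous hence bounded on $\Y$, so the integrals pass to the limit.

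The term-by-term bookkeeping in the forward direction is routine. The one genuinely technical ingredient is the simultaneous uniform approximation of $\psi_1$ together with its first-order partials by polynomials, i.e. the $\C^1$-density used in the converse; I would isolate this as the main step and justify it either by citing the $\C^1$-convergence of Bernstein polynomials or by a short mollification-plus-Weierstrass argument on the rectangle $\T\times\X$. Everything else reduces to linearity and the dominated passage to the limit that compactness makes available.
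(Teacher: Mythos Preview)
Your proposal is correct and follows the same density argument as the paper, which simply invokes the Stone--Weierstrass theorem on the compact set $\T\times\X$ to reduce to monomial test functions. Your version is in fact more careful than the paper's one-line proof, since you explicitly address the need for $\C^1$ (not merely $\C^0$) approximation given that $\partial_t\psi_1$ and $\partial_x\psi_1$ appear in the integrand.
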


\begin{proof}\:{\bf of Lemma \ref{poly-representation1}}
Since $\T\times \X$ is a compact set, as a consequence of the Stone-Weierstrass theorem, we can restrict the test functions to
$\psi_1=t^{\alpha_1}x^{\alpha_2}$ for $\alpha \in \N^2$ to enforce \eqref{linear-burgers1}.
\end{proof}

\subsubsection{Second step: enforcing \eqref{linear-burgers2} by moment constraints}

As noticed in Remark \ref{remark-kruzkhov}, the entropy inequality is satisfied for all convex pairs $(\eta,q)$ if and only it is satisfied for all Kruzkhov entropies given in \eqref{kruzkhov-entropy}. To express \eqref{linear-burgers2} as moment constraints, we are faced with two issues: first, taking into account an uncountable family of functions parametrized by $v\in\Y$ and, second, the absolute value function $v\mapsto \vert v\vert$ is not a polynomial. To deal with the uncountable family of functions, we introduce $v$ as a new variable. To treat the absolute value, we double the number of measures. 

More precisely, we define the Borel measures $\vartheta^+,\vartheta^-$ whose supports are defined as follows
\begin{equation*}
\ELp:=\spt{\vartheta^+} = \lbrace (t,x,y,v)\in \KL \times\mathbf{Y}: y\geq v\rbrace, 
\end{equation*} 
\begin{equation*}
\ELm:=\spt{\vartheta^-} = \lbrace (t,x,y,v)\in \KL \times\mathbf{Y}: y\leq v\rbrace.
\end{equation*}

Similarly, we define the time boundary measures $\vartheta_0^+$, $\vartheta_0^-$, $\vartheta_T^+$ and $\vartheta_T^-$ with the following supports
\begin{equation}
\begin{split}
&\ELOp:=\spt{\vartheta^+_0}=\lbrace (t,x,y,v)\in \KLO\times \mathbf{Y}: y\geq v\rbrace,\\
&\ELOm:=\spt{\vartheta^-_0}=\lbrace (t,x,y,v)\in \KLO\times \mathbf{Y}: y\leq v\rbrace,
\end{split}
\end{equation}
and
\begin{equation}
\begin{split}
&\ELTp:=\spt{\vartheta^+_T}=\lbrace (t,x,y,v)\in\KLT\times \mathbf{Y}: y\geq v\rbrace,\\
&\ELTm:=\spt{\vartheta^-_T}=\lbrace (t,x,y,v)\in \KLT\times\mathbf{Y}: y \leq v\rbrace.
\end{split}
\end{equation}

Finally, let us define the space boundary measures $\vartheta^+_L$, $\vartheta_L^-$, $\vartheta_R^+$ and $\vartheta_R^-$ with the following supports
\begin{equation}
\begin{split}
&\ELLp:=\spt{\vartheta^+_L}=\lbrace (t,x,y,v)\in\KLL\times \mathbf{Y}: y\geq v\rbrace\\
&\ELLm:=\spt{\vartheta^-_L}=\lbrace (t,x,y,v)\in\KLL\times \mathbf{Y}: y\leq v\rbrace
\end{split}
\end{equation}
and
\begin{equation}
\begin{split}
&\ELRp:=\spt{\vartheta^+_R}=\lbrace (t,x,y,v)\in\KLR\times \mathbf{Y}: y\geq v\rbrace,\\
&\ELRm:=\spt{\vartheta^-_R}=\lbrace (t,x,y,v)\in\KLR\times \mathbf{Y}: y\leq v\rbrace.
\end{split}
\end{equation}

We are now in position to state the following lemma.
\begin{lemma}[Recovering all Kruzkhov entropies]
\label{lemma-recover}
Assume that
\begin{equation}
\label{constraint-k1}
\vartheta^+ + \vartheta^- = \nu \otimes \lambda_{\mathbf{Y}}, 
\end{equation}
\begin{equation}
\label{constraint-k2}
\vartheta^+_0 + \vartheta^-_T = \nu_0\otimes \lambda_{\mathbf{Y}}, \quad \vartheta^+_T  + \vartheta^-_T = \nu_T\otimes \lambda_{\mathbf{Y}},
\end{equation}
\begin{equation}
\label{constraint-k3}
\vartheta^+_L + \vartheta^-_L = \nu_L\otimes \lambda_{\mathbf{Y}}, \quad \vartheta^+_R + \vartheta^-_R = \nu_R\otimes \lambda_{\mathbf{Y}}.
\end{equation}
Then, \eqref{linear-burgers2} is equivalent to
\begin{equation}
\label{2.burgers:compact:poly}
\begin{split}
&\int_{\ELp} \theta(v)\left(\frac{\partial \psi_2}{\partial t}(y-v) + \frac{\partial \psi_2}{\partial x} (f(y)-f(v))\right) d\vartheta^+ \\
&+\int_{\ELm} \theta(v) \left(\frac{\partial \psi_2}{\partial t}(v-y) + \frac{\partial \psi_2}{\partial x} (f(v)-f(y))\right) d\vartheta^-\\
&+ \int_{\ELOp} \theta(v)\psi_2(0,x)(y-v) d\vartheta_0^+ +\int_{\ELOm}\theta(v)\psi_2(0,x)(v-y) d\vartheta_0^- \\
&- \int_{\ELTp}\theta(v)\psi_2(T,x)(y-v) d\vartheta_T^+ - \int_{\ELTm}\theta(v)\psi_2(T,x)(v-y) d\vartheta_T^-\\
&+\int_{\ELLp} \theta(v)\psi_2(t,L)(f(y)-f(v)) d\vartheta_L^+ + \int_{\ELLm} \theta(v)\psi_2(t,L)(f(v)-f(y)) d\vartheta_L^- \\
&- \int_{\ELRp} \theta(v) \psi_2(t,R)(f(y)-f(v)) d\vartheta_R^+ - \int_{\ELRm} \theta(v) \psi_2(t,R)(f(v)-f(y) ) d\vartheta_R^-\geq 0,
\end{split}
\end{equation}
for all nonnegative functions $\theta\in \C(\Y)$. 
\end{lemma}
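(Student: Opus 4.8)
The plan is to reduce the claimed equivalence to a pointwise-in-$v$ statement, namely the validity of \eqref{linear-burgers2} for each Kruzkhov pair $\eta_v(y)=|y-v|$, $q_v(y)=\sign(y-v)(f(y)-f(v))$ and a fixed nonnegative $\psi_2$. The decisive step is to observe that the gluing conditions \eqref{constraint-k1}--\eqref{constraint-k3} rigidly determine $\vartheta^{\pm}$ and their boundary counterparts. Indeed, write $\rho:=\nu\otimes\lambda_{\Y}$. Since $\rho$ is a product measure and, for fixed $y$, the slice $\{v=y\}$ is $\lambda_{\Y}$-null, the diagonal $\{y=v\}$ is $\rho$-null. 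Two nonnegative measures $\vartheta^{+},\vartheta^{-}$ with $\vartheta^{+}$ supported in $\{y\geq v\}$, $\vartheta^{-}$ in $\{y\leq v\}$ and $\vartheta^{+}+\vartheta^{-}=\rho$ are then forced to be the restrictions $\vartheta^{+}=\rho|_{\{y\geq v\}}$ and $\vartheta^{-}=\rho|_{\{y\leq v\}}$: on the open set $\{y>v\}$ only $\vartheta^{+}$ can charge mass, on $\{y<v\}$ only $\vartheta^{-}$, and the diagonal carries none. The same argument applied to \eqref{constraint-k2}--\eqref{constraint-k3} pins down $\vartheta_0^{\pm},\vartheta_T^{\pm},\vartheta_L^{\pm},\vartheta_R^{\pm}$ as the corresponding restrictions of $\nu_0\otimes\lambda_{\Y}$, $\nu_T\otimes\lambda_{\Y}$, etc.

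With this identification I would rewrite the left-hand side of \eqref{2.burgers:compact:poly}. On $\ELp$ one has $y\geq v$, so $y-v=\eta_v(y)$ and $f(y)-f(v)=\sign(y-v)(f(y)-f(v))=q_v(y)$; on $\ELm$ one has $y\leq v$, so $v-y=\eta_v(y)$ and $f(v)-f(y)=q_v(y)$. Hence, through the forced restriction, each $\vartheta^{+}$/$\vartheta^{-}$ pair recombines into a single integral of $\theta(v)$ times the Kruzkhov integrand against $\rho=\nu\otimes\lambda_{\Y}$, and likewise for every boundary pair. Using $d\nu=dt\,dx\,d\mu_{(t,x)}$ and Fubini on the compact domain, the whole expression collapses to
\begin{equation*}
\int_{\Y}\theta(v)\,G(v)\,\lambda_{\Y}(dv),
\end{equation*}
where $G(v)$ is exactly the left-hand side of \eqref{linear-burgers2} evaluated at the pair $(\eta_v,q_v)$ and the given $\psi_2$. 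The only bookkeeping here is matching the signs of the volume, initial, terminal and lateral boundary terms, which agree with \eqref{linear-burgers2} by construction.

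It then remains to pass between $\int_{\Y}\theta\,G\,d\lambda_{\Y}\geq0$ for all nonnegative $\theta\in\C(\Y)$ and $G(v)\geq0$ for all $v$. For the direct implication, if \eqref{linear-burgers2} holds for all convex pairs then in particular $G(v)\geq0$ for every $v$, and integrating against the nonnegative weight $\theta\,\lambda_{\Y}$ gives \eqref{2.burgers:compact:poly}. For the converse I would first check that $v\mapsto G(v)$ is continuous: it integrates the fixed finite measures against the integrands $|y-v|$ and $\sign(y-v)(f(y)-f(v))$, the latter being jointly continuous because $f(y)-f(v)$ vanishes on the diagonal, so dominated convergence applies. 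Continuity together with the full support of $\lambda_{\Y}$ upgrades $\int_{\Y}\theta\,G\,d\lambda_{\Y}\geq0$ for all $\theta\geq0$ to $G(v)\geq0$ for all $v$ (a neighbourhood where $G<0$ would otherwise yield a violating test function). Since this holds for every nonnegative $\psi_2$, Remark \ref{remark-kruzkhov} lets me conclude that validity for all Kruzkhov pairs is equivalent to validity for all convex pairs, i.e. to \eqref{linear-burgers2}. The main obstacle is the first step: recognizing that the innocuous mass-balance constraints \eqref{constraint-k1}--\eqref{constraint-k3} force $\vartheta^{\pm}$ to be genuine restrictions, since this is precisely what makes the measure-splitting faithfully encode the nonsmooth functions $|y-v|$ and $\sign(y-v)$; everything after is Fubini and a density argument.
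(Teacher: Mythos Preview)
Your argument is correct and follows the same line as the paper's: use the support conditions to rewrite $(y-v)$ and $(v-y)$ as $|y-v|$ (and likewise for the flux terms), combine the two integrals via the sum constraints \eqref{constraint-k1}--\eqref{constraint-k3} into a single integral against $\nu\otimes\lambda_{\Y}$, factor with Fubini as $\int_{\Y}\theta(v)\,G(v)\,dv$, and then localize in $v$. The one substantive difference is your opening step, where you argue that the constraints \emph{force} $\vartheta^{\pm}$ to be the restrictions of $\nu\otimes\lambda_{\Y}$ to $\{y\gtrless v\}$. The paper does not need this: since $y-v=|y-v|$ on $\spt{\vartheta^{+}}$ and $v-y=|y-v|$ on $\spt{\vartheta^{-}}$, the sum of the two integrals already equals the Kruzkhov integrand integrated against $\vartheta^{+}+\vartheta^{-}$, regardless of how mass might be shared on the diagonal $\{y=v\}$---where all integrands vanish anyway. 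So uniqueness of the decomposition, while true (and your null-diagonal argument is fine), is a detour rather than the crux. On the other hand, your explicit treatment of both directions of the equivalence and the continuity of $v\mapsto G(v)$ to pass from ``a.e.\ $v$'' to ``all $v$'' supply details the paper leaves implicit.
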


Note that from the Stone-Weierstrass Theorem, the constraints \eqref{constraint-k1}, \eqref{constraint-k2} and \eqref{constraint-k3} can be expressed as moment constraints: \eqref{constraint-k1} holds if and only if, for all $ \alpha\in\N^4$,
\begin{equation}
\label{moment-constraint-meas}
\int_{\ELp\cup \ELm} t^{\alpha_1}x^{\alpha_2}y^{\alpha_3} v^{\alpha_4} d(\vartheta^+ + \vartheta^-)(t,x,y,v) =  \int_{\KL} t^{\alpha_1}x^{\alpha_2}y^{\alpha_3} d\nu(t,x,y) \int_{\Y} v^{\alpha_4}  dv
\end{equation}
and similarly for \eqref{constraint-k2} and \eqref{constraint-k3}.

\begin{proof}\textbf{ of Lemma \ref{lemma-recover}:}
For conciseness, we focus only on the first two term in \eqref{2.burgers:compact:poly}. The terms considering the boundary measures can be treated similarly. Let us prove that if for all nonnegative functions $\theta\in \C^1(\mathbf{Y})$ and all nonnegative functions $\psi_2\in \C^1(\mathbf{T}\times\mathbf{X})$,
\begin{equation}
\label{proof-kruzkhov}
\begin{split}
&\int_{\ELp} \theta(v)\left(\frac{\partial \psi_2}{\partial t}(y-v) + \frac{\partial \psi_2}{\partial x} (f(y)-f(v))\right) d\vartheta^+ \\
&+\int_{\ELm} \theta(v) \left(\frac{\partial \psi_2}{\partial t}(v-y) + \frac{\partial \psi_2}{\partial x} (f(v)-f(y))\right) d\vartheta^-=\\
&\int_{\ELp\cup\ELm} \theta(v)\left(\frac{\partial \psi_2}{\partial t}|y-v| + \frac{\partial \psi_2}{\partial x}\sign(y-v)(f(y) -f(v))\right)d(\vartheta^+ +\vartheta^-)\geq 0
\end{split}
\end{equation}
then the following inequality holds, for all test functions $\psi_2\in \C^1(\mathbf{T}\times\mathbf{X})$ and all $v\in\mathbf{Y}$:
\begin{equation}
\int_{\mathbf{T}\times\mathbf{X}\times\mathbf{Y}} \frac{\partial \psi_2}{\partial t}|y-v| + \frac{\partial\psi_2 }{\partial x}\sign(y-v)(f(y)-f(v))d\nu \geq 0.
\end{equation}
First, observe that \eqref{constraint-k1} implies that
\begin{equation}
\begin{split}
&\int_{\ELp \cup \ELm} \theta(v)\left(\frac{\partial \psi_2}{\partial t}|y-v| + \frac{\partial \psi_2}{\partial x}\sign(y-v)(f(y)-f(v))\right)d(\vartheta^+ +\vartheta^-)=\\
&\int_{\mathbf{Y}} \theta(v)\left( \int_{\mathbf{T}\times\mathbf{X}\times\mathbf{Y}}\left(\frac{\partial \psi_2}{\partial t}|y-v| + \frac{\partial \psi_2}{\partial x}\sign(y-v)(f(y)-f(v))\right) d\nu \right) dv.
\end{split}
\end{equation}
Then, since \eqref{proof-kruzkhov} holds for any nonnegative functions $\theta$, 
and $y-v=\vert y-v\vert$ on $\spt{\vartheta^+}$ 
(resp. $v-y=\vert y-v\vert$ on $\spt{\vartheta^-}$),
\begin{equation}
 \int_{\mathbf{T}\times\mathbf{X}\times\mathbf{Y}}\left(\frac{\partial \psi_2}{\partial t}|y-v| + \frac{\partial \psi_2}{\partial x}\sign(y-v)(f(y)-f(v))\right) d\nu \geq 0.
\end{equation}
\end{proof}

In order to express \eqref{linear-burgers2} as moment constraints, it remains to prove that the functions $\psi_2$ and $\theta$ can be replaced by suitable polynomials. Here, in contrast with the first step, where the functions $\psi_1$ were unconstrained, the functions $\psi_2$ and $\theta$ have to be nonnegative. To address this issue, we again use positivity certificates from real algebraic geometry.

\begin{lemma}
\label{lemma-poly-representation-2}
Let \begin{equation}
\begin{split}
&\phi^{+,\alpha}_1(t,x,y,v):=t^{\alpha_1}(T-t)^{\alpha_2}(x-L)^{\alpha_3}(R-x)^{\alpha_4}(v-\yL)^{\alpha_5}(\yR-v)^{\alpha_6}(y-v),\\
&\phi^{-,\alpha}_1(t,x,y,v):=t^{\alpha_1}(T-t)^{\alpha_2}(x-L)^{\alpha_3}(R-x)^{\alpha_4}(v-\yL)^{\alpha_5}(\yR-v)^{\alpha_6}(v-y),\\
&\phi^{+,\alpha}_2(t,x,y,v)::=t^{\alpha_1}(T-t)^{\alpha_2}(x-L)^{\alpha_3}(R-x)^{\alpha_4}(v-\yL)^{\alpha_5}(\yR-v)^{\alpha_6}(f(y)-f(v)),\\
&\phi^{-,\alpha}_2(t,x,y,v):=t^{\alpha_1}(T-t)^{\alpha_2}(x-L)^{\alpha_3}(R-x)^{\alpha_4}(v-\yL)^{\alpha_5}(\yR-v)^{\alpha_6}(f(v)-f(y))
\end{split}
\end{equation}
for $\alpha\in\N^6$.
Then, \eqref{linear-burgers2} is equivalent to
\begin{equation}
\label{second-moment-constraint}
\begin{split}
&\int_{\ELp} \left(\frac{\partial \phi^{+,\alpha}_1}{\partial t} + \frac{\partial \phi^{+,\alpha}_2}{\partial x}\right) d\vartheta^+ + \int_{\ELm} \left(\frac{\partial \phi^{-,\alpha}_1}{\partial t} + \frac{\partial \phi^{-,\alpha}_2}{\partial x}\right) d\vartheta^{-}\\
& + \int_{\ELOp} \phi^{+,\alpha}_1 d\vartheta_0^+ + \int_{\ELOm} \phi^{-,\alpha}_1 \d\vartheta_0^- - \int_{\ELTp} \phi^{+,\alpha}_1 d\vartheta_T^+ - \int_{\ELTm} \phi^{-,\alpha}_1 \d\vartheta_T^-\\
& + \int_{\ELLp} \phi^{+,\alpha}_2 d\vartheta_L^+ + \int_{\ELLm} \phi^{-,\alpha}_2 d\vartheta_L^- - \int_{\ELRp} \phi^{+,\alpha}_2 d\vartheta_R^+ - \int_{\ELRm} \phi^{-,\alpha}_2 d\vartheta_R^-\geq 0
\end{split}
\end{equation}
for all $\alpha\in\N^6$.
\end{lemma}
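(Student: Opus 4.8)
The plan is to prove the two implications separately, viewing \eqref{second-moment-constraint} as the restriction of the already-established equivalent form \eqref{2.burgers:compact:poly} (from Lemma \ref{lemma-recover}) to a distinguished countable family of test functions, and then recovering the full family of admissible $(\psi_2,\theta)$ by approximation.

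First, I would observe that each constraint \eqref{second-moment-constraint} is exactly \eqref{2.burgers:compact:poly} evaluated at the factored test functions
\[
\psi_2(t,x) = t^{\alpha_1}(T-t)^{\alpha_2}(x-L)^{\alpha_3}(R-x)^{\alpha_4}, \qquad \theta(v) = (v-\yL)^{\alpha_5}(\yR-v)^{\alpha_6}.
\]
Indeed, since $\theta$ and the differences $y-v$ and $f(y)-f(v)$ are independent of $t$ and $x$, one has $\partial_t\phi^{+,\alpha}_1 = \theta(v)\,\partial_t\psi_2\,(y-v)$ and $\partial_x\phi^{+,\alpha}_2 = \theta(v)\,\partial_x\psi_2\,(f(y)-f(v))$, while the boundary integrands match term by term after restricting to the corresponding faces. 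As the chosen $\psi_2$ is nonnegative on $\T\times\X$ and the chosen $\theta$ is nonnegative on $\Y$, Lemma \ref{lemma-recover} immediately yields the implication \eqref{linear-burgers2} $\Rightarrow$ \eqref{second-moment-constraint}.

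For the converse I would exploit the linearity of the functional in \eqref{2.burgers:compact:poly}, call it $\mathcal{G}(\psi_2,\theta)$, in $\theta$ and in the $1$-jet $(\psi_2,\partial_t\psi_2,\partial_x\psi_2)$. Assuming \eqref{second-moment-constraint}, we know $\mathcal{G}\geq 0$ on every pair of generator products. Because $\theta$ enters undifferentiated and is integrated against finite measures with the bounded continuous factors $y-v$ and $f(y)-f(v)$, the map $\theta\mapsto\mathcal{G}(\psi_2,\theta)$ is continuous for the sup norm; approximating any nonnegative $\theta\in\C(\Y)$ by nonnegative combinations of the generators $(v-\yL)^{i}(\yR-v)^{j}$ (e.g.\ its Bernstein polynomials, whose coefficients are the nonnegative values of $\theta$ at the grid nodes) and passing to the limit shows $\mathcal{G}(\psi_2,\theta)\geq 0$ for every nonnegative $\theta\in\C(\Y)$ and every $(t,x)$-generator product $\psi_2$.

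The delicate point, and the main obstacle, is the reduction in $\psi_2$, since $\psi_2$ enters only through its first derivatives: a naive sup-norm (Weierstrass or Handelman) approximation of a nonnegative $\psi_2\in\C^1(\T\times\X)$ by nonnegative combinations of products would not control $\partial_t\psi_2$ and $\partial_x\psi_2$, so the interior integral need not converge. I would resolve this with the tensor-product Bernstein polynomials $B_{m,n}(\psi_2)$ on $\T\times\X$: up to positive normalising constants these are nonnegative combinations of precisely the products $t^{\alpha_1}(T-t)^{\alpha_2}(x-L)^{\alpha_3}(R-x)^{\alpha_4}$, with coefficients the nodal values of $\psi_2$, hence nonnegative whenever $\psi_2\geq 0$; and by the classical $\C^1$-approximation property of the Bernstein operator, $B_{m,n}(\psi_2)\to\psi_2$ together with its first partials, uniformly on $\T\times\X$ (and thus also on its four edges, where the boundary integrands live). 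Since all the measures $\vartheta^{\pm}$, $\vartheta^{\pm}_0$, $\vartheta^{\pm}_T$, $\vartheta^{\pm}_L$, $\vartheta^{\pm}_R$ are finite and the remaining factors bounded, $\mathcal{G}(\cdot,\theta)$ is continuous for $\C^1$-convergence of $\psi_2$; applying it to the nonnegative combinations $B_{m,n}(\psi_2)$ (for which $\mathcal{G}\geq 0$ by the previous paragraph) and letting $m,n\to\infty$ gives $\mathcal{G}(\psi_2,\theta)\geq 0$ for every nonnegative $\psi_2\in\C^1(\T\times\X)$ and nonnegative $\theta\in\C(\Y)$. This is exactly \eqref{2.burgers:compact:poly}, so Lemma \ref{lemma-recover} yields \eqref{second-moment-constraint} $\Rightarrow$ \eqref{linear-burgers2} and the stated equivalence follows.
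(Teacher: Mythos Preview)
Your argument is correct and takes a genuinely different route from the paper's. The paper invokes Handelman's Positivstellensatz to write any strictly positive polynomial $\psi_2$ on $\T\times\X$ (resp.\ $\theta$ on $\Y$) as a finite nonnegative combination of the generator products $t^{\alpha_1}(T-t)^{\alpha_2}(x-L)^{\alpha_3}(R-x)^{\alpha_4}$ (resp.\ $(v-\yL)^{\alpha_5}(\yR-v)^{\alpha_6}$), and then appeals to Stone--Weierstrass to reduce \eqref{linear-burgers2} to polynomial test functions. Your approach replaces this by tensor-product Bernstein polynomials, which are themselves nonnegative combinations of exactly the same generators whenever $\psi_2\geq 0$, and which moreover converge in $\C^1$.

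The comparison is instructive. Handelman is a clean algebraic certificate but applies only to \emph{strictly} positive polynomials, so the paper's argument implicitly needs a perturbation step; more importantly, the paper's appeal to Stone--Weierstrass does not by itself control $\partial_t\psi_2$ and $\partial_x\psi_2$, which is precisely the obstacle you flag and resolve. Your Bernstein route is more elementary, handles nonnegativity and $\C^1$-approximation simultaneously, and makes the passage to the limit in the interior integral fully explicit. On the other hand, the paper's Handelman-based argument has the conceptual advantage of yielding an \emph{exact} finite representation (not merely an approximation) for every strictly positive polynomial test function, which connects more directly to the Positivstellensatz machinery underlying the moment--SOS hierarchy used later in the paper.
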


\begin{proof}\textbf{ of Lemma \ref{lemma-poly-representation-2}:}
The proof relies on a result of real algebraic geometry. 
Again, invoking the Stone-Weierstrass Theorem, in \eqref{linear-burgers2} we can restrict the test functions $\psi_2$ and $\theta$ to be polynomials. To enforce their positivity, we use Handelman's Positivstellensatz \cite{handelman1988} that implies that
\begin{equation}
\label{Handelman}
\begin{split}
&\psi_2(t,x) = \sum_{\alpha_1,\alpha_2,\alpha_3,\alpha_4\in\mathbb{N}^4} c^{\psi_2}_{\alpha} t^{\alpha_1}(T-t)^{\alpha_2}(x-L)^{\alpha_3}(R-x)^{\alpha_4},\\
&\theta(v) = \sum_{\alpha_5,\alpha_5\in\mathbb{N}} c^{\theta}_{\alpha} (v-\yL)^{\alpha_5}(\yR-v)^{\alpha_6}
\end{split}
\end{equation}
with finitely many positive real coefficients $c^{\psi_2}_{\alpha}, c^{\theta}_{\alpha}$. Now, \eqref{second-moment-constraint} implies that
\begin{equation}
\label{second-moment-constraint-non}
\begin{split}
&\sum_{\alpha\in\mathbb{N}^{6}}c^{\psi_2}_{\alpha} c^{\theta}_{\alpha} \left\{\int_{\ELp} \left(\frac{\partial \phi^{+,\alpha}_1}{\partial t} + \frac{\partial \phi^{+,\alpha}_2}{\partial x}\right) d\vartheta^+ + \int_{\ELm} \left(\frac{\partial \phi^{-,\alpha}_1}{\partial t} + \frac{\partial \phi^{-,\alpha}_2}{\partial x}\right) d\vartheta^{-} \right.\\
& + \int_{\ELOp} \phi^{+,\alpha}_1 d\vartheta_0^+ + \int_{\ELOm} \phi^{-,\alpha}_1 d\vartheta_0^- - \int_{\ELTp} \phi^{+,\alpha}_1 d\vartheta_T^+ - \int_{\ELTm} \phi^{-,\alpha}_1 d\vartheta_T^- \\
& + \int_{\ELLp} \phi^{+,\alpha}_2 d\vartheta_L^+ + \int_{\ELLm} \phi^{-,\alpha}_2 d\vartheta_L^- \left.- \int_{\ELRp} \phi^{+,\alpha}_2 d\vartheta_R^+ - \int_{\ELRm} \phi^{-,\alpha}_2 d\vartheta_R^-\right\}\geq 0,
\end{split}
\end{equation}
which, by linearity of the integrals and the derivatives, recovers  \eqref{2.burgers:compact:poly} for $\psi_2$ and $\theta$ given in \eqref{Handelman}. Consequently, by Lemma \ref{lemma-recover}, \eqref{2.burgers:compact:poly} implies that \eqref{linear-burgers2} holds.\end{proof}

\begin{remark}
Note that the measure defined in \eqref{mv-occupation-measure} is similar to the occupation measure introduced in \cite{lasserre2008nonlinear}, which deals with optimal control of nonlinear finite-dimensional systems.  This notion has been further used in many other contexts, as for instance the computation of region of attraction \cite{henrion2014convex}. Therefore, the formulation given in \eqref{linear-burgers1}-\eqref{linear-burgers2} might be instrumental to solve other problems than computing numerically the solution of scalar hyperbolic PDE. 
\end{remark}

The next section aims at showing that a moment formulation can be numerically solved thanks to the moment-SOS hierarchy and SDP.

\subsection{The Generalized Moment Problem and its relaxations}

Roughly speaking, the {\em Generalized Moment Problem} (GMP) is an infinite-dimensional linear optimization problem on finitely many Borel measures $\optmeas_i\in\meas(\K_i)_+$ whose supports are contained in given sets $\K_i\subseteq\R^{n_i}$, with $i=1,\ldots,k$ and $n_i\in\N$. That is, one is interested in finding measures whose moments satisfy  (possibly countably many) linear constraints and which minimize a linear criterion. In full generality, the GMP is intractable, but if all
$\K_i$ are basic semi-algebraic sets and the integrands are polynomials (semi-algebraic functions are also allowed\footnote{A semi-algebraic function is a function whose graph is a semi-algebraic set, i.e. it is described by finitely many polynomial inequalities and equations.}), then one may provide an efficient numerical scheme
to approximate as closely as desired any finite number of moments 
of optimal solutions of the GMP. It consists of solving a hierarchy of 
semidefinite programs\footnote{A semidefinite program is a particular class of a convex conic optimization problem that can be solved numerically efficiently.} of increasing size. Convergence of
this numerical scheme is guaranteed by invoking
powerful results from Real Algebraic Geometry (essentially positivity certificates). 

Let $h_i\in \mathbb{R}[\genvar^i]$ and $h_{i,k} \in\mathbb{R}[\genvar^i]$ be polynomials in the vector of indeterminates $\genvar^i \in \R^{n_i}$ and let $b_k$ be real numbers, for finitely many $i=1,2,\ldots,N$ and countably many $k=1,2,\ldots$. The GMP is the problem
\begin{equation}\label{gmp}
\begin{array}{lll}
\rho^\star:= & \inf_{\optmeas} & \sum_{i=1}^N\int_{\K_i} h_i d\nu_i \\
& \mathrm{s.t.} & \sum_{i=1}^{N}\int_{\K_i} h_{i,k} d\optmeas_i \leqq b_k,\quad k=1,2,\ldots\\
& & \optmeas_i\in\meas(\K_i)_+,\quad i=1,\ldots,N.
\end{array}
\end{equation}

\textbf{Entropy mv solution as a GMP} In the scalar hyperbolic case, the measures $\nu_i$ under consideration are $\nu, \nu_T, \nu_0, \nu_R, \nu_L$ and all the measures we have introduced when transforming the Kruzkhov inequality into moment constraints. The sets $\K_i$  correspond to $\T$, $\X$ and $\Y$. Finally, the polynomials $h_{i,j}$ are given in \eqref{first-moment-constraint} (conservation law) \eqref{second-moment-constraint} (entropy inequality), \eqref{moment-constraint-meas} (Kruzhkov entropies), and (\ref{moment-constraint-measm}-\ref{moment-constraint-measR}) (boundary measures).

We may also define an objective functional
\begin{equation}
\label{cost-function}
\int_{\KL} h d\nu + \int_{\KLO} h_0 d\nu_0 + \int_{\KLT} h_Td\nu_T + \int_{\KLL} h_L d\nu_L + \int_{\KLR} h_Rd\nu_R,
\end{equation}
with $h,h_0,h_T,h_L,h_R\in\mathbb{R}[t,x,y]$.

If $\sigma_0 = \delta_{y_0(x)}$ with $y_0$ an initial condition in \eqref{1.burgers:compact}-\eqref{2.burgers:compact} and, in addition, if one imposes suitable boundary measures as exposed in Remark \ref{rem:boundary}, then this objective functional is not especially useful to recover the entropy mv solution of scalar hyperbolic PDE, since the corresponding Young measure is concentrated as a consequence of Theorem \ref{thm-concentration}: there is nothing to be optimized. However, with such an objective functional, one can compute quantities of interest such as the energy of the solution. Moreover, our aim is to \textit{relax} the GMP in order to solve it numerically and, then, this objective functional might be helpful to accelerate the convergence of the corresponding relaxations. We refer to Section \ref{3.riemann} for more discussions about the choice of objective functionals for the Riemann problem of the Burgers equation.

Finally, one is able to define a GMP describing entropy mv solution:
\begin{equation}
\label{gmp-pde}
\begin{array}{ll}
\inf_{\nu,\: \nu_T,\: \nu_L\text{ and/or }\nu_R} & \eqref{cost-function} \text{ (objective functional)}\\
\text{s.t.} & \eqref{first-moment-constraint} \text{ (conservation law)},\\
& \eqref{second-moment-constraint} \text{ (entropy inequality)},\\
& \eqref{moment-constraint-meas} \text{ (Kruzhkov entropies)},\\ & (\ref{moment-constraint-measm}-\ref{moment-constraint-measR}) \text{ (boundary measures)},\\
& \nu\in\meas(\KL)_+ \text{ (occupation measure)},\\
& \nu_T\in\meas(\KLT)_+ \text{ (time boundary measure)},\\
& \nu_L\in\meas(\KLL)_+ \text{ (space boundary measure)},\\
& \text{and/or } \nu_R\in\meas(\KLR)_+ \text{ (space boundary measures)}
\end{array}
\end{equation}
where the measures defined in \eqref{constraint-k1}-\eqref{constraint-k3} and related to the Kruzkhov entropies are considered as implicit variables.

\textbf{From measures to moments} Instead of optimizing over the measures in problem (\ref{gmp-pde}), we optimize over their moments. For simplicity and clarity of exposition, we describe the approach 
in the case of a single unknown measure $\nu$, but it easily extends to the case of several measures. So consider the simplified GMP:
\begin{equation}\label{gmp-simple}
\begin{array}{lll}
\rho^\star = & \inf_{\optmeas} & \int_{\K} h d\nu \\
& \text{s.t.} & \int_{\K} h_k d\optmeas \leqq b_k,\quad k=1,2,\ldots\\
& & \optmeas\in\meas(\K)_+.
\end{array}
\end{equation}
The \emph{moment sequence} $\mom = (\mom_\alpha)_{\alpha\in\N^n}$ of a given measure $\optmeas\in\meas(\K)_+$ is defined by 
\begin{equation}\label{moments}
\mom_\alpha=\int_K \genvar^\alpha \d\optmeas, \quad \alpha\in\N^n
\end{equation}
where $\genvar^\alpha =w_1^{\alpha_1}\cdot\ldots\cdot w_n^{\alpha_n}$. Conversely, given a sequence $(\mom_\alpha)_{\alpha\in\N^n}$, if (\ref{moments}) holds for some $\optmeas\in\meas(\K)_+$ we say that the sequence has the representing measure $\optmeas$. Recall that measures on compact sets are uniquely characterized by their moments; see e.g.
\cite[p. 52]{lasserre2009moments}. 

Let $\N^n_d:=\{\alpha\in\N^n : |\alpha|\leq d\}$, where $|\alpha|:=\sum_{i=1}^n \alpha_i$,
and $s(d):=\binom{n+d}{d}$.
A vector $\p:=(\p_\alpha)_{\alpha\in\mathbb{N}^n_d}\in\R^{s(d)}$is the {coefficient vector} (in the monomial basis) of a polynomial $p\in\rx$  with $d=\deg(p)$  expressed as
$p = \sum_{\alpha\in\N^n_{\deg(p)}} \p_\alpha \genvar^\alpha$. Next, integration of 
$p$ with respect to a measure $\optmeas$ involves only finitely many moments:
\[
\int_\K p \d\optmeas = \int_\K \sum_{\alpha\in\N^n_d} \p_\alpha \genvar^\alpha\d\optmeas =  \sum_{\alpha\in\N^n_d} \p_\alpha \int_\K \genvar^\alpha\d\optmeas =  \sum_{\alpha\in\N^n_d} \p_\alpha\, \mom_\alpha.
\]
Next, define a {pseudo-integration} with respect to an arbitrary sequence $\mom\in\R^{\N^n}$ by:
\begin{equation}\label{defeq:Riesz functional}
\rf_\mom(p):= \sum_{\alpha\in\N^n} \p_\alpha z_\alpha
\end{equation}
and $\rf_\mom$ is called the \emph{Riesz functional}. Moment sequences
can be characterized via the Riesz functional:
\begin{theorem}[Riesz-Haviland {\cite[Theorem 3.1]{lasserre2009moments}}]
\label{thm:riesz}
Let $\K\subseteq\R^n$ be closed. A real sequence $\mom$ is the sequence of some measure $\optmeas$ supported on $\K$ if and only if $\rf_\mom(p)\geq 0$ for all $p\in\rx$ nonnegative on $\K$. 
\end{theorem}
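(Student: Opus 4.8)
The forward (necessity) direction is immediate, so I would dispose of it first: if $\mom$ is the moment sequence of some $\optmeas\in\meas(\K)_+$, then for any $p\in\rx$ nonnegative on $\K$ we have $\rf_\mom(p)=\int_\K p\,\d\optmeas\geq 0$, since $p$ is nonnegative on the support of $\optmeas$ and $\optmeas$ is a positive measure. The plan for the converse is to reinterpret $\rf_\mom$ as a \emph{positive} linear functional on a suitable space of continuous functions and then invoke the Riesz representation theorem to manufacture the representing measure. I would first carry this out for compact $\K$, which is the case relevant to this paper, since every $\K$ appearing above is a product of compact intervals.

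Regard $\rx$ as a subspace $F$ of $E:=\C(\K)$ by restricting polynomials to $\K$. The first thing to verify is that $\rf_\mom$ descends to a well-defined functional on polynomial \emph{functions} on $\K$: if $p$ and $q$ agree on $\K$, then $p-q\geq 0$ and $q-p\geq 0$ on $\K$, so the hypothesis forces $\rf_\mom(p-q)\geq 0$ and $\rf_\mom(q-p)\geq 0$, whence $\rf_\mom(p-q)=0$. Thus $\rf_\mom$ is unambiguous on $F$ and, by assumption, nonnegative on the cone $F\cap C$, where $C:=\{g\in\C(\K):g\geq 0\}$. Next I would extend $\rf_\mom$ from $F$ to all of $E$ by the M.~Riesz extension theorem, the Hahn--Banach analogue for positive functionals. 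Its hypothesis is a majorization condition: every $g\in E$ must satisfy $f-g\in C$ for some $f\in F$. On compact $\K$ this is trivial, since any continuous $g$ is bounded by some constant $M$, and the constant polynomial $M\in F$ gives $M-g\in C$. The theorem then produces $\Phi:\C(\K)\to\R$ extending $\rf_\mom$ and nonnegative on $C$.

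Being a positive linear functional on $\C(\K)$ with $\K$ compact, $\Phi$ is represented by a unique $\optmeas\in\meas(\K)_+$ through the Riesz representation theorem, so that $\Phi(g)=\int_\K g\,\d\optmeas$ for all $g\in\C(\K)$. Specializing to $g=\genvar^\alpha$ yields $\mom_\alpha=\rf_\mom(\genvar^\alpha)=\Phi(\genvar^\alpha)=\int_\K \genvar^\alpha\,\d\optmeas$, so $\optmeas$ represents $\mom$ and the compact case is complete.

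The genuine difficulty lies in an unbounded closed $\K$. There the majorization step breaks down, because continuous functions on $\K$ need not be dominated by polynomials: polynomials are themselves unbounded and cannot control, for instance, a bounded function oscillating at infinity. The standard remedy is to pass to the one-point compactification $\K\cup\{\infty\}$, construct a positive functional on $\C(\K\cup\{\infty\})$ there, and then show that the resulting measure places no mass at the point at infinity, so that it restricts to a measure on $\K$ with the prescribed moments. Establishing this absence of escaping mass — a tightness estimate extracted from the positivity of $\rf_\mom$ alone — is the main obstacle and the step demanding the most care. For the compact supports used throughout this paper it does not arise, so the three preceding steps already suffice for our purposes.
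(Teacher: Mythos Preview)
The paper does not prove this theorem; it is quoted with a citation to \cite[Theorem 3.1]{lasserre2009moments} and used as a black box, so there is no in-paper argument to compare against.

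Your proof for compact $\K$ is correct and is the standard one: pass from $\rf_\mom$ on $\rx$ to a well-defined functional on polynomial functions on $\K$, extend positively to $\C(\K)$ via the M.~Riesz extension theorem (the majorization hypothesis being automatic since constants dominate bounded functions), and represent the extension by a positive measure through Riesz--Markov. Since every support arising in the paper is a product of compact intervals, this already delivers everything the paper needs, as you note.

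One caveat on your sketch for unbounded $\K$: the one-point compactification framing does not mesh directly with the machinery you set up, because polynomials do not extend to elements of $\C(\K\cup\{\infty\})$ --- they blow up at the added point --- so $\rx$ is not a subspace there and the M.~Riesz extension cannot be run inside $\C(\K\cup\{\infty\})$. The cleaner route is to extend $\rf_\mom$ positively to the space of functions on $\K$ dominated by some nonnegative polynomial (the majorization hypothesis is then tautological), observe that this space contains $\C_c(\K)$, apply Riesz--Markov to the restriction to obtain $\optmeas\in\meas(\K)_+$, and finally verify $\int_\K \genvar^\alpha\,\d\optmeas=\mom_\alpha$ by a cutoff argument using estimates of the type $(1-\chi_n)\,|\genvar|^{|\alpha|}\leq n^{-(2k-|\alpha|)}(\genvar_1^2+\cdots+\genvar_n^2)^{k}$ for $|\genvar|\geq n$. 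You are right that this last tightness step is where the genuine work lies; the compactification language just obscures how to set it up.
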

Assuming that $\K$ is closed, we can reformulate the GMP \eqref{gmp-simple} as a linear problem on moment sequences. Consider the optimization problem:
\begin{equation}\label{defeq:GMP_MOM}
\begin{array}{lll}
\rho^\ast= & \inf_{\mom} & \rf_\mom(h) \\
& \text{s.t.} & \rf_\mom(h_k)\leqq b_k,\:k=1,2,\ldots \\
&& \rf_\mom(p)\geq 0,\: \text{for all}\:p\in\rx\:\text{nonnegative on}\:\K.
\end{array}
\end{equation}
By Theorem \ref{thm:riesz}, the two formulations \eqref{defeq:GMP_MOM}
and \eqref{gmp-simple} are equivalent. Of course problem \eqref{defeq:GMP_MOM}
is still numerically intractable. 

The second and last step to approximate GMPs numerically consists of replacing the cone of polynomials nonnegative on $\K$ by a more tractable cone. This is where one exploits the fact that $\K$ is basic semi-algebraic set.

\textbf{From nonnegative polynomials to sums of squares} Characterizing nonnegativity of polynomials is an important issue in real algebraic geometry. 
Let $\K$ be a {\em basic semi-algebraic set}, i.e.:
\begin{equation}
\label{semi-algebraic-set}
\K=\{\genvar\in\R^n : g_1(\genvar)\geq0,\ldots,g_m(\genvar)\geq0\}.
\end{equation}
for some polynomials $g_1,\ldots,g_m\in\rx$, and assume that $\K$ is compact. In addition
assume  that one of the polynomials, i.e. the first one, is $g_1(\genvar):=N-\sum_{i=1}^n\genvar_i^2$ for some $N\in\N$ sufficiently large\footnote{This condition is slightly stronger than asking $\K$ to be basic semi-algebraic compact. However, the inequality $N-\sum_{i=1}^n\genvar_i^2\geq 0$ can always be added as a redundant constraint to the description of a basic semi-algebraic compact set.}. For notational convenience we let $g_0(\genvar) := 1$.

\begin{remark}

Note that the compact sets $\T,\X$ and $\Y$ which are defined in the latter section can be expressed as basic semi-algebraic compact sets. Indeed, one has
\begin{equation}
\T=\lbrace t\in\R: t(T-t)\geq 0\rbrace,\quad \X=\lbrace x\in\R: (x-L)(R-x)\geq 0\rbrace,\quad \Y=\lbrace y\in\R: (y-\yL)(\yR-y)\geq 0\rbrace.
\end{equation}
\end{remark}
Recall that a polynomial $s\in\rx$ is a \emph{sum of squares} (SOS) if 
there are finitely many polynomials $q_1,\ldots,q_r$ such that
$s(\genvar)=\sum_{j=1}^{r} q_j(\genvar)^2$ for all $\genvar$.

The following result due to Putinar \cite{putinar1993positive} is crucial
to approximate \eqref{defeq:GMP_MOM} numerically.
\begin{theorem}[Putinar's Positivstellensatz]\label{thm:putinar}
If $p>0$ on $\K$ then
$p = \sum_{j=0}^m s_jg_j$
for some SOS polynomials $s_j\in\rx$, $j=0,1,\ldots,m$.
\end{theorem}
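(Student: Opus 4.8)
The plan is to prove that strict positivity of $p$ on the compact basic semi-algebraic set $\K$ forces membership in the quadratic module generated by $g_0=1,g_1,\ldots,g_m$. The quadratic module is
\[
M := \left\{ \sum_{j=0}^m s_j g_j : s_j \in \rx \text{ SOS} \right\}.
\]
The crucial structural input is the assumption that $g_1 = N - \sum_i \genvar_i^2$ is among the generators; this is precisely the \emph{Archimedean} condition, which guarantees that for every $\genvar$-coordinate the polynomials $\pm\genvar_i + c$ (for $c$ large) lie in $M$. My first step would be to extract this Archimedean property cleanly: since $N - \sum_i \genvar_i^2 \in M$, a short argument shows $C - \genvar_i^2 \in M$ for suitable constants $C$, and from bounding a single squared coordinate one deduces that $C' \pm \genvar_i \in M$ by writing $C' \pm \genvar_i = \tfrac12\big((\genvar_i \pm \tfrac12)^2 + \text{something in }M\big)$ type completions of squares. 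The upshot is that $M$ contains a constant minus any bounded polynomial, i.e. $M$ is Archimedean.

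\textbf{Key steps.} First I would fix the algebraic framework: $M$ is a quadratic module, and the Archimedean property just derived means that for every $q \in \rx$ there is $C>0$ with $C \pm q \in M$. Second, I would argue by contradiction: suppose $p > 0$ on $\K$ but $p \notin M$. The set $M$ is a convex cone in the real vector space $\rx$, and $p$ lies outside it. The heart of the proof is a separation argument combined with a representation theorem: by a Hahn-Banach / Eidelheit separation in the (locally convex) space $\rx$, one produces a nonzero linear functional $L : \rx \to \R$ that is nonnegative on $M$ but satisfies $L(p) \le 0$. Third, I would use the Archimedean property to show that this $L$ (after normalization $L(1)=1$) is in fact a \emph{state} that extends to a positive functional, and then invoke a Riesz-type representation to realize $L$ as integration against a probability measure $\mu$ supported on $\K$: concretely, $L(s_j g_j)\ge 0$ for all SOS $s_j$ forces $L$ to be positive semidefinite, the Archimedean bounds force the associated moment sequence to have compact support, and positivity on each $g_j$ forces that support to lie inside $\K=\{g_j\ge 0\}$.

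\textbf{The main obstacle.} The delicate point — and the one I expect to consume most of the work — is step three: passing from the separating functional $L$ to an honest representing measure $\mu$ on $\K$. This requires showing that Archimedeanity of $M$ makes $L$ not merely linear-and-nonnegative-on-$M$ but \emph{continuous and representable}, so that the Riesz-Haviland theorem (Theorem \ref{thm:riesz}) or its spectral-theoretic analogue applies. One must verify that the bounds $C \pm \genvar_i \in M$ translate into the Carleman-type growth control on $L(\genvar^{2\alpha})$ needed to guarantee a unique compactly supported representing measure, and that $L(g_j) \ge 0$ together with $L$ being multiplicative-enough on the relevant subalgebra pins the support inside $\K$. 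Once $\mu$ exists, the contradiction is immediate: $L(p) = \int_\K p \d\mu > 0$ because $p>0$ on $\K$ and $\mu$ is a nonzero measure supported there, contradicting $L(p)\le 0$. Hence $p \in M$, which is exactly the claimed representation $p = \sum_{j=0}^m s_j g_j$.
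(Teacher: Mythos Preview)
The paper does not actually prove Theorem~\ref{thm:putinar}: it is stated as a known result and attributed to Putinar \cite{putinar1993positive}, with no argument given. There is therefore no ``paper's own proof'' to compare your proposal against.

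That said, your outline is essentially the standard modern proof of Putinar's theorem and is correct in its broad strokes: one exploits the Archimedean property of the quadratic module $M$ (guaranteed here by the redundant constraint $g_1 = N - \sum_i \genvar_i^2$), assumes $p \notin M$, separates $p$ from the convex cone $M$ by a linear functional $L$ nonnegative on $M$, and then argues that $L$ is represented by a measure supported in $\K$, yielding the contradiction $L(p) > 0$. One small imprecision worth flagging: you do not need $L$ to be ``multiplicative-enough''; the passage from $L$ to a representing measure on $\K$ goes either through the GNS construction (producing commuting bounded self-adjoint operators from the Archimedean bounds, then applying the spectral theorem) or, equivalently, by showing directly that the Archimedean condition forces the moment sequence to satisfy the determinacy conditions so that Riesz--Haviland applies with support automatically contained in $\K$. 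Either route closes the argument; your sketch is on the right track but would need one of these made precise to be a complete proof.
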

By a density argument, checking nonnegativity of $\rf_\mom$ on polynomials nonnegative on $\K$ can be replaced by checking nonnegativity only on polynomials that are strictly positive on $\K$ and hence on those that have an SOS representation as in Theorem \ref{thm:putinar}. 

Next, for a given integer $d$, 
denote by $\Sigma[\genvar]_d\subset\R[\genvar]$ the set of SOS polynomials 
of degree at most $2d$, and define the cone $Q(g) \subset \R[\genvar]$ by:
\begin{equation}\label{quad-module}
Q_d(g)\,:=\,\left\{\sum_{j=0}^m\sigma_j\,g_j: 
\:{\rm deg}(\sigma_j\,g_j)\leq 2d,\:\sigma_j\in\Sigma[\genvar],\: j=0,1,\ldots,m\right\}
\end{equation}
and observe that $Q_d(g)\subset Q_{d+1}(g)$ consist of polynomials positive on $\K$ for all $d$.

Let $\mmon_d :=(\genvar^\alpha)_{|\alpha|\leq d} \in \R[\genvar]^{s(d)}$ be the vector of monomials of degree at most $d$. For instance, for $n=2$ and $d=3$, $\mmon_d = (1 \: w_1\: w_2 \: w_1^2 \; w_1w_2 \; w_2^2 \; w_1^3 \; w_1^2w_2 \; w_1w_2^2 \; w_2^3)$.
For $j=0,1,\ldots,m$, let $d_j$ denote the smallest integer larger than or equal to $\deg(g_j)/2$, let
$M_{d-d_j}(g_j\:\mom)$ denote the real symmetric matrix linear in $\mom$ corresponding to the entrywise application of $\rf_\mom$ to the matrix polynomial $g_j\,\mmon_{d-d_j}\mmon_{d-d_j}^T$. For $j=0$, i.e., $g_0 = 1$, this matrix is called \emph{moment matrix}.
It turns out that $\rf_\mom(g_j\,q^2)\geq0$ for all $q\in\R[\genvar]_d$ if and only if $M_{d-d_j}(g_j\:\mom)\succeq 0$ where the inequality means positive semidefinite. Therefore checking whether $\rf_\mom$ is nonnegative on 
$Q_d(g)$ reduces to checking whether $M_{d-d_j}(g_j\:\mom)\succeq0$ for $j=0,1,\ldots,m$, which are convex linear matrix inequalities in $\mom$.

\textbf{Moment-SOS hierarchy}  The following finite-dimensional semidefinite programming (SDP) problems are relaxations of the moment problem \eqref{defeq:GMP_MOM}:
\begin{equation}\label{defeq:SDPrelaxation}
\begin{array}{lll}
\rho_d^\ast= & \inf_{\mom} & \rf_\mom(h) \\
& \text{s.t.} & \rf_\mom(h_k)\leqq b_k, \;\text{deg}(h_k) \leq 2d,\:k=1,2,\ldots\\
&& M_{d-d_j}(g_j\:\mom)\succeq 0, \: j=0,1,\ldots,m
\end{array}
\end{equation}
and they are parametrized by the relaxation order $d$.

\begin{theorem}[Convergence of the moment-SOS hierarchy \cite{lasserre2009moments}]
\label{thm:lasserre-hierarchy}
 Assume there is some $M>0$ and $k\in\N$ such that $\rf_\mom(h_k)\leq b_k$ implies $\rf_\mom(1)<M$.
 Then:
 \begin{itemize}
\item[(i)] The semidefinite relaxation \eqref{defeq:SDPrelaxation} has an optimal solution
$\mom^d=(\mom^d_{\alpha})$, $\rho_d\leq \rho_{d+1}$ and $\lim_{d\to\infty}\rho_d = \rho^\ast$;
\item[(ii)] If \eqref{defeq:GMP_MOM} has a unique minimizer $\mom^\ast$, then
\begin{equation}
\label{conv}
\lim_{d\to\infty}\mom^d_\alpha\,=\,\mom^\ast_\alpha,\quad\forall\,\alpha\in\N^n.
\end{equation}
\end{itemize}
\end{theorem}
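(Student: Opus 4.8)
The plan is to establish, in order, monotonicity of the bounds, existence of the relaxation minimizers by a compactness argument, convergence $\rho_d\to\rho^\ast$, and finally convergence of the moments under the uniqueness hypothesis. The two workhorses will be Putinar's Positivstellensatz (Theorem \ref{thm:putinar}) and the Riesz--Haviland theorem (Theorem \ref{thm:riesz}); the boundedness hypothesis enters only through the ball constraint $g_1$.

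First I would dispose of the easy half of (i). Since the moment matrix $M_{d-d_j}(g_j\:\mom)$ is a principal submatrix of $M_{d+1-d_j}(g_j\:\mom)$, every point feasible for \eqref{defeq:SDPrelaxation} at order $d+1$ restricts to a feasible point at order $d$ with the same objective, giving $\rho_d\le\rho_{d+1}$. Likewise, any sequence feasible for \eqref{defeq:GMP_MOM} is a genuine moment sequence, so it satisfies all the localizing matrix inequalities (these are necessary conditions), hence is feasible for every relaxation; this yields $\rho_d\le\rho^\ast$, so the nondecreasing sequence $(\rho_d)$ converges to some $\bar\rho\le\rho^\ast$.

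The crux, and where I expect the main difficulty, is the compactness step. The hypothesis forces $\rf_\mom(1)=\mom_{0}<M$ on the feasible set, and the ball constraint $g_1=N-\sum_{i=1}^n\genvar_i^2$ built into the description of $\K$ should propagate this single bound to all moments. Concretely, positivity of $M_{d-d_1}(g_1\:\mom)$ gives $\rf_\mom(\Vert\genvar\Vert^2 q^2)\le N\,\rf_\mom(q^2)$ for every polynomial $q$, which recursively bounds $\mom_\alpha$ by a constant depending only on $N$, $M$ and $\vert\alpha\vert$, uniformly in $d$. For fixed $d$ this makes the feasible set of \eqref{defeq:SDPrelaxation} closed and bounded, and since $\rf_\mom(h)$ is linear a minimizer $\mom^d$ exists, proving the existence claim in (i). To close the gap $\bar\rho=\rho^\ast$, I use the same uniform bounds: each coordinate sequence $(\mom^d_\alpha)_d$ is bounded, so a diagonal extraction produces a subsequence with $\mom^{d_\ell}_\alpha\to\hat\mom_\alpha$ for all $\alpha$. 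Positive semidefiniteness passes to limits, so $\hat\mom$ makes all localizing matrices PSD, i.e. $\rf_{\hat\mom}$ is nonnegative on $\bigcup_d Q_d(g)$. By Theorem \ref{thm:putinar} this module contains every polynomial strictly positive on $\K$, and adding $\varepsilon p_0$ for a fixed strictly positive $p_0$ and letting $\varepsilon\downarrow0$ extends nonnegativity of $\rf_{\hat\mom}$ to all polynomials nonnegative on $\K$. Theorem \ref{thm:riesz} then yields a representing measure, so $\hat\mom$ is feasible for \eqref{defeq:GMP_MOM}; passing to the limit in the constraints and the objective gives $\rf_{\hat\mom}(h)=\bar\rho\ge\rho^\ast$, hence $\bar\rho=\rho^\ast$.

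Finally, for (ii), the subsequential limit $\hat\mom$ just constructed is feasible for \eqref{defeq:GMP_MOM} with objective value $\rho^\ast$, so it is an optimal solution; by the assumed uniqueness it equals $\mom^\ast$. Since the full sequence $(\mom^d)$ is bounded coordinatewise and every convergent subsequence has the same limit $\mom^\ast$, the whole sequence converges coordinatewise to $\mom^\ast$, which is precisely \eqref{conv}.
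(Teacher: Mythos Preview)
Your proof is correct and follows essentially the same route as the paper's: bound the pseudo-moments via the ball constraint $g_1$ and the mass bound, extract a convergent subsequence by compactness, and identify the limit as a genuine moment sequence feasible for \eqref{defeq:GMP_MOM} via Putinar. The only cosmetic differences are that the paper uses a scaling into the unit ball of $\ell_\infty$ plus Banach--Alaoglu instead of your diagonal extraction, and it invokes the dual of Putinar's theorem directly rather than going explicitly through the $\varepsilon p_0$ argument and Riesz--Haviland; neither changes the substance.
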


A proof is provided in \cite{lasserre2009moments}, but, for clarity, we recall the steps and the arguments used to obtain the result.

\begin{proof}\textbf{ of Theorem \ref{thm:lasserre-hierarchy}:}
Let $\mom:=(\mom_\alpha)_{\alpha\in\N^n_{2d}}$ be a feasible solution
of \eqref{defeq:SDPrelaxation}. From $M_{d-d_1}(g_1\,\mom)\succeq0$
we obtain $\sum_{i^=1}^n \rf_\mom(\genvar_i^2)\leq N\mom_0$, and in particular
$\rf_\mom(\genvar_i^2)\leq N\mom_0$, $i=1,\ldots,n$. 
By iterating one also obtains $\rf(\genvar_i^{2d})\leq N^d\mom_0$, 
$i=1,\ldots,n$.  
Moreover, combining with $M_d(\mom)\succeq0$, from \cite{lasserre2009moments}:
\begin{equation*}
|\mom_\alpha |\leq \max\left[\mom_0, \max_{i=1,\ldots,n} \rf_{\mom}(\genvar_i^{2d})\right]\,=:\,\tau_d,\quad \forall \alpha\in\mathbb{N}^{n}_{2d}.
\end{equation*} 
This inequality together with the fact that $\rf_\mom(1)=\mom_0$  is bounded implies that the moment sequence $\mom$ is uniformly bounded. Then, the feasible set of \eqref{defeq:SDPrelaxation} is closed, bounded, and hence compact. Hence \eqref{defeq:SDPrelaxation} has an optimal solution $\mom^d$.

Next, for a given $d$, let $\mom^d$ be an optimal solution to \eqref{defeq:SDPrelaxation} and 
complete $\mom^d$ with zeros to make it an infinite sequence indexed by $\alpha\in\N^n$.
Then define:
\begin{equation}
\label{scaling}
\hat{\mom}^d_\alpha:=\frac{\mom^d_\alpha}{\tau_k},\quad \forall\alpha\in\mathbb{N}^n;\quad
2k-1\leq\vert\alpha\vert\leq 2k;\:k=1,\ldots,d.
\end{equation} 
By construction, $|\hat{\mom}^d_{\alpha}|\leq 1$, for all $\alpha\in\N^n$ and therefore
$\hat{\mom}^d$ becomes an element of the unit ball $\mathbf{B}_1$ of the Banach space $\ell_\infty$ of bounded sequences, equipped with the sup-norm. Since $\ell_\infty$ is the topological dual of $\ell_1$, by the Banach-Alaoglu theorem \cite[Theorem 3.16]{brezis2010functional}, $\mathbf{B}_1$ is weak star (sequentially) compact. Hence there exists $\hat{\mom}^\star\in\mathbf{B}_1$ and a subsequence $\lbrace d_k\rbrace\subset \N$ such that $\hat{\mom}^{d_k}\rightarrow \hat{\mom}^\star$ for the weak star topology $\sigma(\ell_\infty,\ell_1)$. In particular, for every $\alpha\in\mathbb{N}^n$, $\lim_{k\rightarrow \infty} \hat{\mom}^{d_k}_\alpha=\hat{\mom}^\star_\alpha$.  Since $\tau_k$ is bounded for all $k=1,\ldots,d$, using \eqref{scaling} 
in the other direction, there exists $\mom^\ast=(\mom^\ast_\alpha)_{\alpha\in\N^n}$ such that 
\begin{equation}
\label{pointwise-convergence}
\lim_{k\rightarrow \infty} \mom_\alpha^{d_k}=\mom^\star_\alpha,\quad \forall \alpha\in\mathbb{N}^n.
\end{equation}
The pointwise convergence \eqref{pointwise-convergence} implies $M_d(g_j \:\mom^\star)\succeq 0$ for $j=0,1,\ldots,m$. Hence by Theorem \ref{thm:putinar}, $\mom^\ast$ has a representing measure $\nu$ supported on $\K$. In particular, from \eqref{pointwise-convergence},
$\rf_\mom(h_k)\leqq b_k$ for all $k=1,2,\ldots$ which proves that $\nu$ is a feasible solution of \eqref{gmp-simple}. In addition, 
\begin{equation}
\rho^\star \geq \lim_{k\rightarrow +\infty}\rho_{d_k}^\star=\int_{\K} h d\nu\geq \rho^\star,
\end{equation}
which proves that $\nu$ is an optimal solution of \eqref{gmp-simple}. Finally, if
\eqref{gmp-simple} has a unique minimizer $\nu^\star$ then 
$\nu=\nu^\star$ and the convergence \eqref{pointwise-convergence} holds for the whole sequence,
which yields \eqref{conv}. This concludes the proof. \end{proof}

\begin{remark}[Extension to several measures]
\label{extension-several}
Theorem \ref{thm:lasserre-hierarchy} extends naturally to the GMP 
\eqref{gmp} with finitely many measures. A more detailed discussion is provided in \cite[Section 4.5.2 p. 88]{lasserre2009moments}.
\end{remark}

\textbf{Convergence of the relaxations of \eqref{gmp-pde}} Problem \eqref{gmp-pde} can be approximated by a hierarchy of semidefinite relaxations as mentioned in Remark \ref{extension-several}. Moreover observe that the mass of all measures appearing in \eqref{gmp-pde} is bounded, because their marginals with respect to time and/or space are Lebesgue. Indeed, for instance, $\int_{\T\times\X\times\Y} d\nu=\int_{\T\times \X \times \Y} dt\,dx\,\mu_{(t,x)}(dy)=1$, where we have used the fact that $\mu$ is a Young measure. 

Then, according to Remark \ref{extension-several} and Theorem \ref{thm:lasserre-hierarchy}, optimal solutions of the moment-SOS hierarchy \eqref{defeq:SDPrelaxation} (adapted to the present context) converge to optimal solutions of \eqref{gmp-pde} as $d$ goes to infinity, in the sense of \eqref{conv}. In particular, one may extract the mv solution of \eqref{linear-burgers1} and even obtain the entropy solution of \eqref{1.burgers:compact}-\eqref{2.burgers:compact}, provided that $\sigma_L$ and/or $\sigma_R$ and $\sigma_0$ are concentrated.

\subsection{Interpretation of the moment solutions}
\label{sec_extract}
An optimal solution $\mom^d$ at step $d$ of the moment-SOS hierarchy of relaxations \eqref{defeq:SDPrelaxation} adapted to
the GMP \eqref{gmp-pde}, consists of  finite sequences of approximate moments, one for each unknown measure 
of \eqref{gmp-pde}. If one is interested in statistical properties of the mv solution such as its mean or its variance, the moments provide the perfect information, at least for sufficiently large $d$. However, if one is rather interested in properties of the graph of the entropy solution, a post processing step is required.

\textbf{An inverse problem} Recovering the graph of the solution $\{(t,x,y(t,x)): t\in \T, x\in \X\} \subset \T\times\X\times\Y$ from the moments of the measure $\nu=\lambda_\T\lambda_\X\delta_{y(t,x)}$ is an inverse problem whose detailed study is out of the scope of this paper, see e.g. \cite{claeys2015} in the context of controlled ODEs.  However, we briefly outline here one possible strategy with a formal justification. It turns out that it works surprising well in all our examples of the Burgers equation with or without shock.

Let $\genvar = (t,x,y)$ and $\mom_\alpha = \int_{\T\times\X\times\Y} \genvar^\alpha\d\nu$ denote the vector of moments of $\nu$. For any polynomial $p\in\rx_d$ with vector of coefficients $\p$ in the monomial basis, it holds
\[
\p^\top M_d(\mom) \p = \int p^2\d\nu.
\]
Consequently, if $\p$ is in the kernel of $M_d(\mom)$, we have that $$\int_{\T\times\X\times\Y} p^2\d\nu = 0.$$ In other words, the support of the measure is contained in the zero level set of every polynomial (whose vector of coefficients is) in the kernel of the moment matrix. 
However, this inclusion can be strict in some cases. Therefore we propose to also consider polynomials corresponding to small eigenvalues. Let us explain this now:

Since the moment matrix is positive semidefinite, it has a spectral decomposition
\begin{equation}\label{eig}
M_d(\mom) = P E P^\top
\end{equation}
where $P$ is an orthonormal matrix whose columns are denoted $\p_i$, $i=1,2,\ldots$ and satisfy $\p^\top_i \p_i = 1$ and $\p^\top_i \p_j = 0$ if $i\neq j$, and $E$ is a diagonal matrix whose diagonal entries are eigenvalues $e_{i+1} \geq e_i \geq 0$ of the moment matrix. Each column $\p_i$ is the vector of coefficients in the monomial basis of a polynomial $p_i \in\rx$, so that
\[
\p^\top_i M_d(\mom) \p_i = \int p^2_i\d\nu = e_i.
\]

The following result shows that the measure is concentrated on a sublevel set of an SOS polynomial constructed from the spectral decomposition of the moment matrix. 
\begin{lemma}[Concentration inequality]\label{concentration}
Let $r \in \N$ and $\beta>0$. Define
\[
\gamma = \frac{\sum_{i=1}^r e_i}{\beta}
\]
and  
\begin{equation}\label{sos}
\quad p_{\mathrm{sos}} = \sum_{i=1}^r p^2_i.
\end{equation}
Then
\[
\nu(\{\genvar : p_{\mathrm{sos}}(\genvar) \leq \gamma\}) \geq 1-\beta.
\]
\end{lemma}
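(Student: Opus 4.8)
The statement is a direct instance of Markov's inequality, so the plan is short. First I would record the only nontrivial input, which comes from the spectral decomposition displayed just before the lemma: for each eigenpair $(\p_i, e_i)$ one has $\int p_i^2 \d\nu = \p^\top_i M_d(\mom)\p_i = e_i$, where $p_i$ is the polynomial whose coefficient vector in the monomial basis is the eigenvector $\p_i$. Summing over $i = 1,\ldots,r$ and using linearity of the integral yields
\[
\int p_{\mathrm{sos}} \d\nu = \sum_{i=1}^r \int p_i^2 \d\nu = \sum_{i=1}^r e_i = \beta\gamma .
\]

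Next I would note two elementary facts. Since $p_{\mathrm{sos}} = \sum_{i=1}^r p_i^2$ is a sum of squares, it is nonnegative on all of $\T\times\X\times\Y$; and, as already observed when discussing the convergence of the relaxations, the marginal of $\nu$ in $(t,x)$ is the normalized Lebesgue measure while its disintegration is a Young measure, so $\nu$ is a probability measure, i.e. $\nu(\T\times\X\times\Y) = 1$. These are precisely the two ingredients needed to apply Markov's inequality to the nonnegative observable $p_{\mathrm{sos}}$ under the probability measure $\nu$.

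Finally I would apply Markov's inequality with threshold $\gamma > 0$, namely
\[
\nu(\{\genvar : p_{\mathrm{sos}}(\genvar) > \gamma\}) \leq \frac{1}{\gamma}\int p_{\mathrm{sos}} \d\nu = \frac{\beta\gamma}{\gamma} = \beta ,
\]
and then pass to the complementary event, using that $\nu$ has unit mass, to conclude $\nu(\{p_{\mathrm{sos}} \leq \gamma\}) = 1 - \nu(\{p_{\mathrm{sos}} > \gamma\}) \geq 1-\beta$. There is essentially no obstacle here: the only points requiring a moment of care are confirming that $\nu$ is genuinely a probability measure (so that the complement step gives the clean bound $1-\beta$ rather than $\nu(\T\times\X\times\Y) - \beta$), and that $\gamma > 0$, which holds as soon as $\sum_{i=1}^r e_i > 0$; in the degenerate case where all these eigenvalues vanish the measure is already concentrated on $\{p_{\mathrm{sos}} = 0\}$ and the claim is trivial.
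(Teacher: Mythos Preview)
Your proposal is correct and follows exactly the same approach as the paper: apply Markov's inequality to the nonnegative function $p_{\mathrm{sos}}$ under the probability measure $\nu$, using $\int p_{\mathrm{sos}}\d\nu = \sum_{i=1}^r e_i = \beta\gamma$, and then pass to the complement. The paper's own proof is a single displayed inequality stating precisely this, so your write-up is in fact more detailed than the original.
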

The proof of Lemma \ref{concentration} follows readily from the inequality
\[
\nu(\{\genvar : p_{\mathrm{sos}}(\genvar) > \gamma\}) \leq \frac{\int p_{\mathrm{sos}} \d\nu}{\gamma} = \beta
\]
which holds since $\nu$ is a probability measure and $p_{\mathrm{sos}}$ is non-negative.
Lemma \ref{concentration} justifies the following algorithm, which extracts from a grid the values at which the polynomial $p_{\mathrm{sos}}$ is small:
\begin{description}
\item[Input] moment matrix $M_d(\mom)$ of measure $\nu =\lambda_\T\lambda_\X\delta_{y(t,x)}$, small real $\epsilon>0$, grid points \\ $(t_i,x_j,y_k)_{i,j,k=1,\ldots,N} \subset \T\times\X\times\Y$;
\item[Step 1] Compute spectral decomposition (\ref{eig}) of $M_d(\mom)$ and construct SOS polynomial $p_{\mathrm{sos}}$ in (\ref{sos}) with the largest number of terms $r$ such that $\sum_{i=1}^r e_i < \epsilon$;
\item[Step 2] For each $i,j,k=1,\ldots,N$, evaluate $p_{i,j,k}:=p_{\mathrm{sos}}(t_i,x_j,y_k)$;
\item[Step 3] For each $i,j=1,\ldots,N$, let $y_{i,j}:=y_{k^*}$ where $k^*:=\mathrm{arg}\:\min_k p_{i,j,k}$;
\item[Output] Approximation $(y_{i,j})_{i,j=1,\ldots,N} \subset \Y$ of $y(t,x)$ at grid points $(t_i,x_j)_{i,j=1,\ldots,N} \subset \T\times\X$.
\end{description}
The computational burden is modest: an eigenstructure decomposition at Step 1, and grid point evaluations of polynomial $p_{\mathrm{sos}}$ at Step 2.

\section{The Riemann problem for the Burgers equation}
\label{3.riemann}
For a numerical illustration, we consider the classical Riemann problem (see e.g., \cite{evans2010pde}) for a Burgers equation. In particular, we choose the flux 
\begin{equation*}
f(y) = \frac{1}{4}y^2.
\end{equation*}
The Riemann problem to this conservation law is a Cauchy problem with the following initial condition, piecewise constant with one point of discontinuity:
\begin{equation*}
y_0(x) = \left\{
\begin{array}{cl}
l &\mbox{if } x< 0,\\
r &\mbox{if } x> 0,
\end{array}
\right.
\end{equation*}
where $l,r\in\mathbb{R}$. The solution to the Riemann problem depends strongly on the values of $l$ and $r$. In particular:
\begin{enumerate}
\item If $l>r$, the shock at the initial condition spreads along the characteristics.
\item If $l<r$, the solution is not necessarily unique. The entropy condition allows to select the right solution, which is known as a rarefaction wave.   
\end{enumerate}
Both cases are interesting from a numerical point of view for their own reasons. In general, the first case is difficult to tackle because of the discontinuity. In general, numerical schemes based on discretization tend to smoothen out the shock. Indeed, recovering numerically the exact point of discontinuitiy is a challenge for these schemes. 

In the second case the solution is continuous, but not necessarily unique. For the Burgers equation, it has been shown that one single entropy condition is sufficient to guarantee uniqueness of the solution \cite{de2004minimal}. To the best of our knowledge, there is no similar result for the uniqueness of entropy mv solutions for Burgers equation with concentrated initial data, except for classical solutions \cite{demoulini2012weak}.

We present numerical results for both cases. We are going to consider $l,r\in\{0,1\}$. Following the discussion yielding (\ref{Y}), we can assume that the solution takes values only in $\Y=[0,1]$. The time-space-window on which we consider the solution is $\T=[0,1]$ and $\X =[L,R]=[-\tfrac{1}{2},\tfrac{1}{2}]$. 

Further note that, from the initial condition, we can derive that $$y(t,L)=l,\quad \forall t\in \T.$$ Moreover, due to positivity of $y$, the solution on $\T\times\X$ does not depend on the initial condition for $x>\tfrac{1}{2}$. 

\textbf{Remark on the significance of the numerical results upfront} We need to emphasize that these experiments are by no means conclusive. Our implementation is based on the Matlab interface Gloptipoly3 \cite{henrion2009gloptipoly} and the SDP solver of MOSEK \cite{mosek2012}.
The purpose of the numerical examples is to show that our framework actually works in practice and with a proper implementation might actually provide an alternative to schemes based on discretization.

\subsection{Shock waves}
Let $l=1$ and $r=0$. As it has been noticed before, with such an initial condition the solution is discontinuous, for all $t>0$. The unique analytical solution corresponding to this initial condition is
\begin{equation}
y^\ast(t,x) = \left\{
\begin{array}{cl}
1 & x> \frac{t}{4},\\
0 & x< \frac{t}{4}.
\end{array}
\right.
\end{equation}


As an objective function, we choose the default implemented in Gloptipoly, which minimizes the trace of the moment matrix.
Since the trace is the convex envelope of the rank on the set of matrices with norm less than one,
this is likely to cause early convergence of the moment-SOS hierarchy: low rank solutions correspond to measures supported on sets of zero Lebesgue measure. As in this case the marginal of $\optmeas$ with respect to $y$ is supported on $\{0,1\}$ we expect this criterion to be appropriate to accelerate convergence. Indeed, for $d=6$ (i.e. moments of degree up to $12$) we end up with the following moments for $y$:
\[
(\mom_{0,0,k})_{k=0,1,\ldots} = (1.0000,  0.6250,   0.6250,  0.6250,  0.6250,\ldots )
\]
which correspond (up to numerical accuracy) exactly with the moments of the analytic solution.

\begin{figure}
\centering{
\includegraphics[scale=0.9]{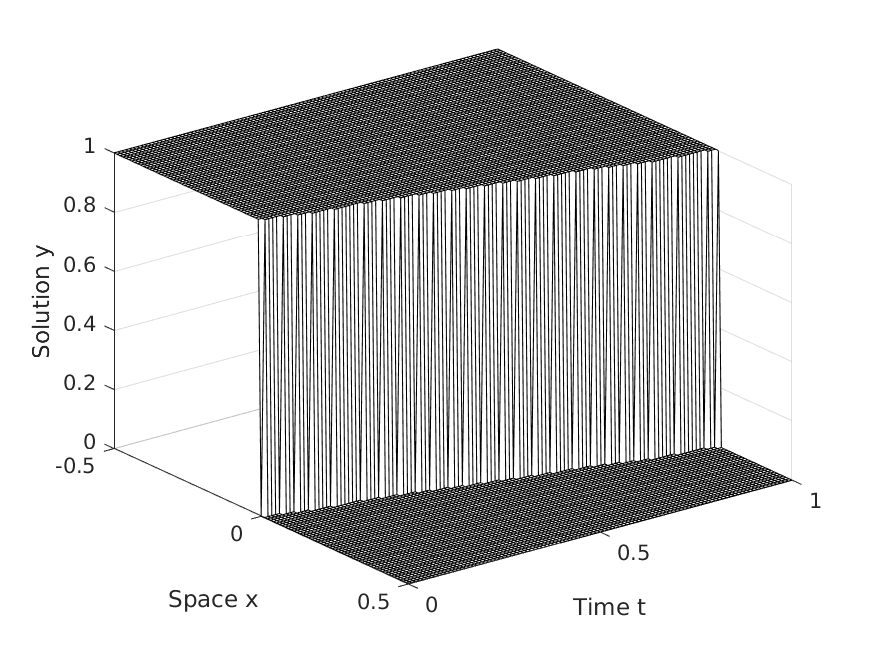}
}
\caption{Approximation of the solution $y(t,x)$ obtained with our GMP approach, in the case of a shock.}
\label{fig:shock}
\end{figure}

\textbf{Localizing the shock} 

In order to approximate the solution from our approximated moments we follow the path lined out in Section \ref{sec_extract}. 
Applying our algorithm with $\epsilon=10^{-6}$ yields a polynomial $p_{\mathrm{sos}}$ with $r=54$ terms in the approximate kernel of the moment matrix of size $84$, and to the approximated solution represented on Figure \ref{fig:shock}.

As already mentioned the computed moments can be used in order to approximate the location of the shock at some given time $t$. Here we will take $t=0.75$, consequently the shock is located at exactly $x=0.1875$. We used a standard Godunov scheme (we refer to \cite{randall1992numerical} for more details) to compute the solution up to this time. For space discretization, we took a mesh size of $0.0005$ and a consistent discretization in time such that the scheme stays stable. In Table \ref{tab:shock}, we display the obtained values from this approach on an interval around the shock. We can see the typical behaviour of shock smoothing.
In contrast, the values obtained by our GMP approach exactly represent the position of the shock.
\begin{table}
\begin{center}
\begin{tabular}{c|cccccccc}
$x$ 		& 0.1850 & 0.1855 & 0.1860 & 0.1865 & 0.1870 & 0.1875 & 0.1880 & 0.1885\\
\hline
Godunov	& 0.9999 & 0.9991 & 0.9936 & 0.9580 & 0.7647 & 0.2724 & 0.0123 & 0.0000\\
GMP		& 1.0000 & 1.0000 & 1.0000 & 1.0000 & 1.0000 & 0.0000 & 0.0000 & 0.0000
\end{tabular}
\caption{Approximation of $y(0.75,x)$ with Godunov and GMP.}
\label{tab:shock}
\end{center}
\end{table}

\subsection{Rarefaction waves}
Now let $l=0$ and $r=1$. As it has been noticed before, with such an initial condition, entropy conditions are crucial to select the right solution, i.e., the solution with a good physical meaning. The analytical entropy solution corresponding to this example is
\begin{equation}
y(t,x) = \left\{
\begin{array}{cl}
0 & x\leq 0,\\
\frac{2x}{t} & 0\leq x\leq\tfrac{t}{2},\\
1 & x\geq \frac{t}{2}.
\end{array}
\right.
\end{equation}

Numerically implementing all entropy pairs of Kruzkhov is possible (as seen in Section \ref{sec_mv-gmp}), but heavy. It is known that the entropy $\eta(y)=y^2$ provides all necessary information to make the entropy solution unique for Burgers equation \cite{de2004minimal}. Then, instead of using all Kruzkhov pairs, we propose the following family of entropies in this example: 
\begin{equation}\label{eq:pol entropy}
\eta_k(y)=y^{k},\qquad \forall k\in\mathbb{N}
\end{equation}
and the corresponding polynomial functions $q_k$. Note that $\eta$ is strictly convex on $\Y=[0,1]$. In particular, we do not have to split the measures in \eqref{linear-burgers2} into two measures, since there is no absolute value appearing in \eqref{eq:pol entropy}. It is neither necessary to introduce a lifting variable as was discussed in Section \ref{sec_mv-gmp}. Finally, we define the sum over all entropy constraints as an objective function to be maximized.

Solving the relaxation of order $d=6$ (i.e. moments of degree up to $12$), we obtain the following moments for the marginal on $y$:
\[
(\mom_{0,0,k})_{k=0,1,\ldots} = (1.0000,	0.3750,	0.3333,	0.3125,	0.3000,	0.2917,	0.2857,	0.2812,\ldots)
\]
which, again up to numerical accuracy, coincide with the moments of the actual analytic entropy solution. Applying the algorithm from Section \ref{sec_extract} with $\epsilon=10^{-6}$ yields a polynomial $p_{\mathrm{sos}}$ with $r=48$ terms in the approximate kernel of the moment matrix of size $84$, and the approximated solution represented on Figure \ref{fig:rarefaction}.
\begin{figure}
\centering{
\includegraphics[scale=0.9]{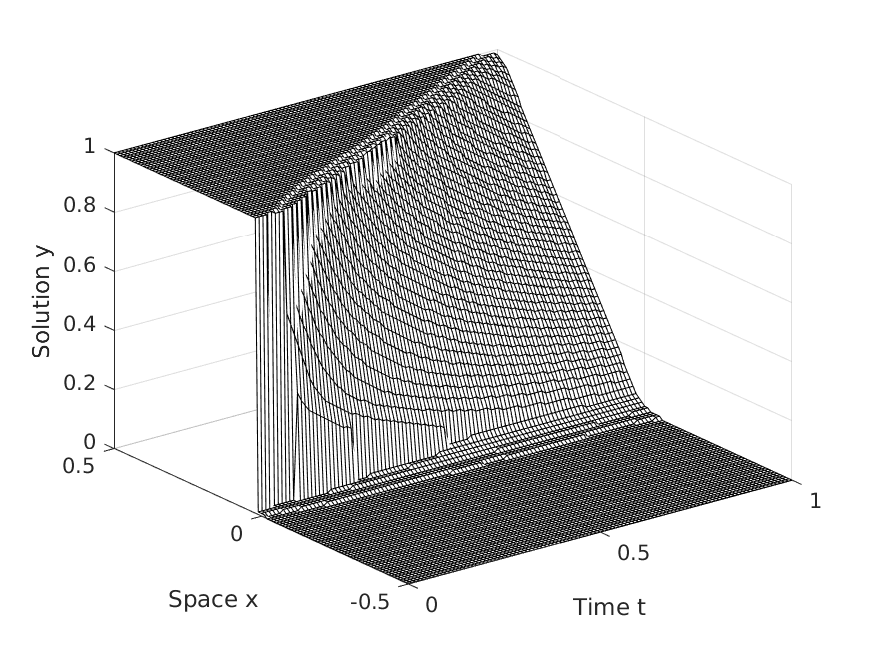}
}
\caption{Approximation of the solution $y(t,x)$ obtained with our GMP approach, in the case of a rarefaction wave.}
\label{fig:rarefaction}
\end{figure}

\section{Conclusion}

\label{5.conclusion}

In this paper, we have provided a new method to solve scalar polynomial hyperbolic partial differential equations. This method relies on the moment-SOS hierarchy surveyed in \cite{lasserre2009moments}. More precisely, we have proved that the truncated moments associated to the measure-valued solution formulation converge to the Dirac measure concentrated on entropy solution to the scalar polynomial solution. we believe that all the arguments of our paper extend to the case of a spatial variable $x$ of dimension greater than one.

The idea of solving linear problems on measures to solve nonlinear differential equation is not new. In the context of nonlinear ordinary differential equations (ODEs), the linear problems involved measures called {occupation measures}. Roughly speaking, occupation measures allow to measure the time spent by a graph of the trajectory of the ODE in a given subset of the state space. Provided that the nonlinearities considered are polynomial, one can transform the nonlinear ODE into a linear {moment problem}, in turn solved numerically with the moment-SOS hierarchy, see
\cite{lasserre2008nonlinear} and the survey \cite{pauwels2017}. Therefore, the current paper can be seen as an extension to (uncontrolled) PDEs of the results provided in \cite{lasserre2008nonlinear} for (controlled) ODEs. 

This opens many further research lines. For example:
\begin{itemize}
\item One of the most interesting aspect of the notion of very weak solution is the linear formulation on measures of nonlinear differential equations. Such formulations have been useful to solve many problems appearing in the ODE framework, such as optimal control \cite{lasserre2008nonlinear} or approximation of region of attraction \cite{henrion2014convex}. The challenge was to prove that the measure formulation was not a relaxation of the original nonlinear problem. For the hyperbolic conservation law studied in our paper, we have used entropy inequalities for that purpose. We are wondering whether it is possible to extend these techniques to the case of other nonlinear PDEs.
\item A class of other nonlinear PDEs could be parabolic ones. One of the interest of these equations is that they regularize the solution, whatever is the initial condition. Therefore, as it is done for ODEs in \cite{lasserre2008nonlinear}, it might be possible to define test functions depending on the solution to the parabolic equation and then define an occupation measure associated to the latter. This together with the relaxed control theory surveyed in \cite{fattorini1999infinite} might be instrumental to solve optimal control problem for nonlinear parabolic equations.
\item The Burgers equation is irreversible. Roughly speaking, given a terminal condition $y(T,x)$ with $T>0$, there exists a continuum of initial conditions yielding $y(T,x)$, see e.g., \cite{gosse2017filtered}. Such a continuum can be described with measures and, hence, our linear formulation might be useful to solve such inverse problems, extending to PDEs what was developed in \cite{henrion2014convex} for ODEs.
\end{itemize}

\textbf{Acknowledgement:} The authors would like to thank Matthieu Barreau for his help with the Godunov numerical scheme and Sylvain Ervedoza for all the interesting and encouraging discussions. This work also benefited from feedback from Yann Brenier, Bruno Despr\'es, Maxime Herda, Milan Korda, Ond\v rej Kreml and Josef M\'alek.

\appendix

\section{Proof of Theorem \ref{thm-concentration}}\vspace{0.5cm}

\label{proof-concentration}

The proof is divided into two steps. The first step consists in proving that $v$ can be replaced by $y(t,x)$ or $\langle \mu_{(t,x)},y\rangle$. The second step aims at proving the contraction inequality given in \eqref{contraction}. In each step, a special choice of test function is done in order to prove the result.\\

\textbf{$\bullet$ First step: Doubling variable} \\

Let us consider the entropy pair given in \eqref{kruzkhov-entropy}. For all $(s,z)\in\R_+\times \R$, we choose $v=y(s,z)$, where $y$ is an entropy solution to \eqref{eq:burgers:compact}:
\begin{equation}
\begin{split}
\int_0^T\int_{\R} &\left( \frac{\partial \psi}{\partial t} \langle \mu_{(t,x)},|y-y(s,z)|\rangle + \frac{\partial \psi}{\partial x} \langle \mu_{(t,x)},\sign(y-y(s,z))(f(y)-f(y(s,z))\rangle\right) dx\:dt \\
&+ \int_{\R} \psi(0,x)\langle \measO,|y-y(s,z)|\rangle dx - \int_{\R} \psi(T,x) \langle \mu_{(t,x)}, |y - y(s,z)|\rangle dx.
\end{split}
\end{equation}
Similarly, for all $(s,z)\in\R_+\times\R$, we set $v=\langle \mu_{(t,x)},y\rangle$ in  \eqref{entropy-sol} and use the fact that $\mu$ is a probability measure:
\begin{equation}
\begin{split}
\int_0^T\int_{\R} &\left(\frac{\partial \psi}{\partial s} \langle \mu_{(t,x)},|y-y(s,z)|\rangle + \frac{\partial \psi}{\partial z} \langle \mu_{(t,x)},\sign(y-y(s,z))(f(y)-y(s,z))\rangle\right) ds\,dz \\
&+ \int_{\R} \psi(0,z)\langle \mu_{(t,x)},|y-y_0(z)|\rangle dz  \geq 0.
\end{split}
\end{equation}

Let us choose $\tilde{\psi}:=\tilde{\psi}(t,x,s,z)$. Thanks to the two latter inequalities, one has
\begin{equation}
\label{Kruzkhov}
\begin{split}
&\int_{\mathbb{R}_+}\int_\mathbb{R}\int_{\mathbb{R}_+}\int_{\mathbb{R}} \left(\left(\frac{\partial \psi}{\partial s}+\frac{\partial \psi}{\partial t}\right)\langle \mu_{(t,x)},|y-y(s,z)|\rangle\right. \\
&\left.+ \left(\frac{\partial \psi}{\partial z}+\frac{\partial \psi}{\partial x}\right)\langle \mu_{(t,x)},\sign(y-y(s,z))(f(y)-f(y(s,z)))\rangle\right) ds\, dz\, dx \,dt\\
&+ \int_{\mathbb{R}}\int_\mathbb{R} \psi(t,x,0,z)\langle \mu_{(t,x)},|y-y_0(z)|\rangle dz \,dx \\
&+ \int_{\mathbb{R}}\int_\mathbb{R} \psi(0,x,s,z)\langle \measO,|y-y(s,z)|\rangle dz\, dx \geq 0.
\end{split}
\end{equation}

Let $\rho_\varepsilon \in \C^1(\R)$ satisfy
\begin{equation}
\int_{\R}\rho_\varepsilon(s) ds = 1. 
\end{equation}
For all $\varphi\in \C^1_c(\R_+\times\R)$, one defines $\tilde{\psi}$ as follows
\begin{equation}
\tilde{\psi}(t,x,s,z) = \frac{1}{\varepsilon^2}\rho_\varepsilon\left(\frac{t-s}{2\varepsilon}\right)\rho_\varepsilon\left(\frac{x-z}{2\varepsilon}\right)\varphi\left(\frac{t+s}{2},\frac{x+z}{2}\right).
\end{equation}
Therefore
\begin{equation}
\label{psit}
\left(\frac{\partial\tilde{\psi}}{\partial t}\tilde{\psi}+\frac{\partial \tilde{\psi}}{\partial s}\right)= \frac{1}{\varepsilon^2}\frac{\partial\varphi}{\partial t}\left(\frac{t+s}{2},\frac{x+z}{2}\right)\rho_\varepsilon\left(\frac{x-z}{2\varepsilon}\right)\rho_\varepsilon\left(\frac{t-s}{2\varepsilon}\right)
\end{equation}
and
\begin{equation}
\label{psix}
\left(\frac{\partial\tilde{\psi}}{\partial x}+\frac{\partial\tilde{\psi}}{\partial z}\right)= \frac{1}{\varepsilon^2}\frac{\partial\varphi}{\partial x}\left(\frac{t+s}{2},\frac{x+z}{2}\right))\rho_\varepsilon\left(\frac{x-z}{2\varepsilon}\right)\rho_\varepsilon\left(\frac{t-s}{2\varepsilon}\right).
\end{equation}
We aim at proving that
\begin{equation}
\rho_\varepsilon\left(\frac{t-s}{\varepsilon}\right)\rightarrow \delta(t-s)\text{ as }\varepsilon\rightarrow 0,
\end{equation}
and
\begin{equation}
\rho_\varepsilon\left(\frac{x-z}{\varepsilon}\right)\rightarrow \delta(x-z)\text{ as }\varepsilon\rightarrow 0,
\end{equation}
so that we will have, thanks to \eqref{psit} and \eqref{psix}
\begin{equation}
\begin{split}
\int_{\R_+}\int_{\R} &\left(\frac{\partial \varphi}{\partial t}\langle \mu_{(t,x)},|y-y(t,x)|\rangle + \frac{\partial \varphi}{\partial x}\langle \mu_{(t,x)},\sign(y-y(t,x))(f(y)-f(y(t,x))\rangle\right) dx\,dt\\
&+ \int_{\R} \varphi(0,x)\langle \mu_{(t,x)},|y-y_0(x)|\rangle dx  \geq 0.
\end{split}
\end{equation}
Noticing that, for a fixed $t>0$,
\begin{equation}
2\int_{\R} \frac{1}{2\varepsilon} \rho\left(\frac{t-s}{\varepsilon}\right) ds = 2,
\end{equation}
then, up to a change of variable, one has, for any continuous function $\phi:=\phi(t,s)$
\begin{equation}
\int_{\R} \int_{\R}\frac{1}{\varepsilon} \rho\left(\frac{t-s}{\varepsilon}\right)\phi(t,s) dt \,ds= 2\int_{\R}\phi(t,t) dt.
\end{equation}
Since $\phi$ is continuous and lies in a compact set, then it is uniformly continuous. Therefore, one has
\begin{equation}
|t-s|\leq 2\varepsilon \Rightarrow |\phi(t,s)-\phi(t,t)|\leq M_\varepsilon.
\end{equation} 
Therefore, for any positive value $k$, it follows that
\begin{equation}
\begin{split}
\left| \int_{\R_+}\int_{\R_+} \frac{1}{\varepsilon}\rho\left(\frac{t-s}{2\varepsilon}\right) \phi(t,s)-\phi(t,t)dt \,ds\right| \leq &  M_\varepsilon \int_{\R}\int_{|t-s|\leq 2\varepsilon,|s|\leq k,|t|\leq k} \frac{1}{\varepsilon}\rho\left(\frac{t-s}{2\varepsilon}\right) dt \,ds\\
\leq & 4 M_{\varepsilon}k.
\end{split}
\end{equation}
Note that
\begin{equation}
M_\varepsilon\rightarrow 0\text{ as }\varepsilon\rightarrow 0.
\end{equation}
Hence,
\begin{equation}
\left| \int_{\R_+}\int_{\R_+} \frac{1}{\varepsilon}\rho\left(\frac{t-s}{2\varepsilon}\right) (\phi(t,s)-\phi(t,t)) dt \,ds\right| \rightarrow 0\text{ as }\varepsilon\rightarrow 0.
\end{equation}
Similarly, one can prove that, for any continuous function $\tilde{\phi}:=\tilde{\phi}(x,z)$
\begin{equation}
\left| \int_{\R}\int_{\R} \frac{1}{\varepsilon}\rho\left(\frac{x-z}{2\varepsilon}\right) (\tilde{\phi}(x,z)-\tilde{\phi}(x,x))dt \,ds\right| \rightarrow 0\text{ as }\varepsilon\rightarrow 0.
\end{equation}
Finally, using \eqref{psit} and \eqref{psix}, the equation \eqref{Kruzkhov} converges to
\begin{equation}
\label{firststep}
\begin{split}
\int_{\R_+}\int_{\R}& \left(\frac{\partial\varphi}{\partial t} \langle \mu_{(t,x)},|y - y(t,x)|\rangle + \frac{\partial \varphi}{\partial x}\langle \mu_{(t,x)},\sign(y -y(t,x))(f(y)-f(y(t,x)))\right) dx \,dt \\
&+ \int_{\R} \varphi(0,x) \langle \measO,|y-y_0(x)|\rangle dx\geq 0,
\end{split}
\end{equation}
as $\varepsilon$ goes to $0$. \\

\textbf{$\bullet$ Second step: Contraction inequality}\\

Given two positive values $r\in\R$ and $T\in\R_+$, let us choose $\varphi$ as follows:
\begin{equation}
\varphi(t,x) = \theta_{\varepsilon}(t)\Delta_{\varepsilon}(t,x),\quad (t,x)\in\R_+\times \R
\end{equation}
where
\begin{equation}
\theta_{\varepsilon}(t):=\left\{
\begin{split}
& 1  & t \leq T,\\
& \frac{T-t}{\varepsilon} &T\leq t \leq T+\varepsilon,\\
& 0  &t\geq T+\varepsilon,
\end{split}
\right.
\end{equation}
and
\begin{equation}
\Delta_{\varepsilon}(t,x) : = \left\{
\begin{split}
& 1 &|x|\leq r + C(T-t),\\
& \frac{r+C(T-t)-|x|}{\varepsilon} & r+C(T-t)\leq |x|\leq r+C(T-t)+\varepsilon,\\
&0  &|x|\geq r+C(T-t)+\varepsilon
\end{split}
\right.
\end{equation}
where $C$ denotes the Lipschitz constant of the function $f$.
Differentating $\theta_\varepsilon$ with respect to $t$ yields
\begin{equation}
\frac{\partial\theta_{\varepsilon}(t)}{\partial t} :=\left\{
\begin{split}
& -\frac{1}{\varepsilon} & T\leq t \leq T+\varepsilon,\\
& 0 
\end{split}
\right.
\end{equation}
Differentiating $\Delta_\varepsilon$ with respect to $t$ yields
\begin{equation}
\frac{\partial \Delta_{\varepsilon}}{\partial t}:= \left\{
\begin{split}
& -\frac{C}{\varepsilon} & r+C(T-t)\leq |x|\leq r+C(T-t)+\varepsilon,\\
& 0 & |x|\geq r+C(T-t)+\varepsilon,
\end{split}
\right.
\end{equation}
Finally, differentiating $\Delta_\varepsilon$ with respect to $x$ yields
\begin{equation}
\frac{\partial \Delta_{\varepsilon}}{\partial x}:= \left\{
\begin{split}
& -\frac{\sign(x)}{\varepsilon} & r+C(T-t)\leq |x|\leq r+C(T-t)+\varepsilon,\\
& 0 & |x|\geq r+C(T-t)+\varepsilon.
\end{split}
\right.
\end{equation}
Hence, using these functions in \eqref{firststep}, one has
\begin{equation}
\label{contraction1}
\begin{split}
&-\frac{1}{\varepsilon}\int_{T}^{T+\varepsilon} \int_{\R} \Delta_{\varepsilon}(t,x)\langle \mu_{(t,x)},|y - y(t,x)|\rangle dt \,dx \\
&-\frac{L}{\varepsilon}\int_0^T \int_{r+C(T-t)\leq |x|\leq r+C(T-t)+\varepsilon} \langle \mu_{(t,x)},|y- y(t,x)|\rangle dt \,dx \\
&- \frac{1}{\varepsilon} \int_{\mathbb{R}_+}\int_{r+C(T-t)\leq |x|\leq r+C(T-t)+\varepsilon} \sign(x)\langle \mu_{(t,x)},\sign(y-y(t,x))(f(y) - f(y(t,x)))\rangle dt \,dx \\
&+ \int_{\R} \varphi(0,x) \langle \measO,|y - y_0(x)|\rangle dx\geq 0.
\end{split}
\end{equation}
Note that one has
\begin{equation}
\sign(x)\langle \mu_{(t,x)},\sign(y-y(t,x))(f(y) - f(y(t,x)))\rangle \leq C\langle \mu_{(t,x)},|y - y(t,x)|\rangle.
\end{equation}
Therefore, \eqref{contraction1} becomes
\begin{equation*}
\frac{1}{\varepsilon}\int_{T}^{T+\varepsilon} \int_{\R} \Delta_{\varepsilon}(t,x) \langle \mu_{(t,x)},|y - y(t,x)|\rangle dt \,dx \leq \int_{\R} \varphi(0,x) \langle \measO,|y - y_0(x)|\rangle dx.
\end{equation*}
From this latter equation, one can conclude the proof when $\varepsilon$ goes to $0$. Indeed, the left hand side of the inequality can be bounded as follows:
\begin{equation}
\begin{split}
\frac{1}{\varepsilon}&\left|\int_{T}^{T+\varepsilon} \int_{\R} \Delta_{\varepsilon}(t,x) \langle \mu_{(t,x)},|y - y(t,x)|\rangle dt \,dx\right| \\
\leq &\frac{1}{\varepsilon}\left|\int_{T}^{T+\varepsilon} \int_{|x|\leq r +\varepsilon} \frac{r+C(T-t)-|x|}{\varepsilon} \langle \mu_{(t,x)},|y - y(t,x)|\rangle dt\,dx\right|\\ 
\leq & \frac{1}{\varepsilon}\int_{T}^{T+\varepsilon} \int_{|x|\leq r+\varepsilon} \left|\frac{r+C(T-t)-|x|}{\varepsilon}\right|\left|\langle \mu_{(t,x)},|y - y(t,x)|\rangle\right| dt \,dx.
\end{split}
\end{equation}
Noticing that, for any $t\in [T,T+\varepsilon]$ and any $|x|\leq r+\varepsilon$, one has
\begin{equation}
\left|\frac{r+C(T-t)-|x|}{\varepsilon}\right|\leq 1,
\end{equation}
then one has
\begin{equation}
\frac{1}{\varepsilon}\int_{T}^{T+\varepsilon} \int_{\R} \Delta_{\varepsilon}(t,x) \langle \mu_{(t,x)},|y - y(t,x)|\rangle dt \,dx \leq \frac{1}{\varepsilon}\int_{T}^{T+\varepsilon} \int_{|x|\leq r+\varepsilon} \left|\langle \mu_{(t,x)},|y - y(,x)|\rangle\right| dt \,dx.
\end{equation}
Moreover, one has
\begin{equation}
\frac{1}{\varepsilon}\int_{T}^{T+\varepsilon} \int_{|x|\leq r+\varepsilon} \left|\langle \mu_{(t,x)},|y- y(,x)|\rangle\right| dt \,dx \rightarrow \int_{|x|\leq r} \langle \measT,|y-y(T,x)|\rangle dx\text{ as }\varepsilon\rightarrow 0.
\end{equation}
Similarly, one can prove that
\begin{equation}
\int_{\R} \varphi(0,x) \langle \measO,|y - y_0(x)|\rangle dx \rightarrow \int_{|x|\leq r+CT} \langle \measO,|y - y_0(x)|\rangle dx \text{ as }\varepsilon \rightarrow 0.
\end{equation}
Finally, it yields, for all positive values $T$ and $r$
\begin{equation}
\int_{|x|\leq r} \langle \measT,|y-y(T,x)|\rangle dx \leq \int_{|x|\leq r+CT} \langle \measO,|y - y_0(x)|\rangle dx.
\end{equation}
This concludes the proof of Theorem \ref{thm-concentration}.

\bibliographystyle{plain}

\end{document}